\renewcommand\subsection{\@startsection{subsection}{2}%
\normalparindent{.5\linespacing\@plus.7\linespacing}{-.5em}
{\normalfont\bfseries}}
\renewcommand\subsubsection{\@startsection{subsubsection}{3}%
\normalparindent{.5\linespacing\@plus.7\linespacing}{-.5em}
{\normalfont\bfseries}}
\def\@tocline#1#2#3#4#5#6#7{\relax
  \ifnum #1>\c@tocdepth % then omit
  \else
    \par \addpenalty\@secpenalty\addvspace{#2}%
    \begingroup \hyphenpenalty\@M
    \@ifempty{#4}{%
      \@tempdima\csname r@tocindent\number#1\endcsname\relax
    }{%
      \@tempdima#4\relax
    }%
    \parindent\z@ \leftskip#3\relax \advance\leftskip\@tempdima\relax
    \rightskip\@pnumwidth plus4em \parfillskip-\@pnumwidth
    #5\leavevmode\hskip-\@tempdima
      \ifcase #1
       \or\or \hskip 1em \or \hskip 2em \else \hskip 3em \fi%
      #6\nobreak\relax
    \dotfill\hbox to\@pnumwidth{\@tocpagenum{#7}}\par
    \nobreak
    \endgroup
  \fi}
\newtheorem{theorem}{Theorem}
\newtheorem{proposition}{Proposition}[section]
\theoremstyle{definition}
\newtheorem{definition}[proposition]{Definition}
\newtheorem{remark}[proposition]{Remark}
\numberwithin{equation}{section}
\newcommand\eps{\varepsilon}
\newcommand\e{{\rm e}}
\newcommand\dd{{\rm d}}
\newcommand\ddt{{\frac{\dd}{\dd t}}}
\def\Re{{\rm Re}}
\def\l {\langle}
\def\r {\rangle}
\newcommand\de{{\partial}}
\newcommand{\norm}[1]{\left\lVert #1 \right\rVert}
\newcommand{\jap}[1]{\left\langle #1 \right\rangle}
\newcommand{\ZZ}{\mathbb{Z}}
\newcommand\TT {{\mathbb T}}
\newcommand\RR {{\mathbb R}}
\newcommand\bu{{\boldsymbol u}}
\newcommand\bU{{\boldsymbol U}}
\newcommand\cG{{\mathcal G}}
\newcommand\hOmega{{\widehat \Omega}}
\newcommand\hTheta{{\widehat \Theta}}
\newcommand{\red}[1]{\textcolor{red}{#1}}
\def\de{{\partial}}
\def\uu {\boldsymbol{u}}
\def\@xfootnote[#1]{%
	\protected@xdef\@thefnmark{#1}%
	\@footnotemark\@footnotetext}
\title[Symmetrization and asymptotic stability in non-homogeneous fluids]{Symmetrization and asymptotic stability in non-homogeneous fluids around stratified shear flows}
\author[R. Bianchini]{Roberta Bianchini}
\address{IAC, Consiglio Nazionale delle Ricerche, 00185 Rome, Italy}
\email{r.bianchini@iac.cnr.it}
\author[M. Coti Zelati]{Michele Coti Zelati}
\address{Department of Mathematics, Imperial College London, London, SW7 2AZ, UK}
\email{m.coti-zelati@imperial.ac.uk}
\author[M. Dolce]{Michele Dolce}
\address{Institute of Mathematics, EPFL, Station 8, 1015 Lausanne, Switzerland}
\email{michele.dolce@epfl.ch}
\keywords{Inviscid damping, shear-buoyancy instability, stratified fluids, Boussinesq approximation, mixing}
\subjclass[2020]{35Q35, 76F10}
\begin{document}

%%%%%%%%%%%%%%%%%%%%%%%%%%%%%%%%%%%%%%
\begin{abstract}
Significant advancements have emerged in the theory of asymptotic stability of shear flows in stably stratified fluids. In this comprehensive review, we spotlight these recent developments, with particular emphasis on novel approaches that exhibit robustness and applicability across various contexts.
\end{abstract}

\maketitle

\setcounter{tocdepth}{1}
\tableofcontents
%%%%%%%%%%%%%%%%%%%%%%%%%%%%%%%%%%%%%%%%%%%%
\section{Introduction}
Under appropriate averaging, a significant portion of Earth's oceans can be regarded as stably stratified fluids. These fluids exhibit dynamic behaviors described by fluctuations around a background density profile that increases with depth - a characteristic known as stable stratification \cite{Buhler2009}. In two-dimensional space and within a specific perturbative regime under the Boussinesq approximation \cite{rieutord2014fluid}, stably stratified fluids can be effectively characterized by what we refer to as the \emph{Boussinesq system around the stratified Couette flow}, written as
\begin{align}
\de_t \bu + y \de_x \bu + (u^y, 0)+(\bu \cdot \nabla) \bu + \nabla q &= -\beta^2 (0,\theta)+ \nu \Delta \bu,  \label{eq:bouss-mom}\\
\de_t \theta + y \de_x\theta + (\bu \cdot \nabla) \theta &= u^y +\kappa \Delta \theta,\label{eq:bouss-den}\\ 
\nabla \cdot \bu&=0,\label{eq:bouss-vel}
\end{align}
where $\bu=(u^x, u^y)$ represents the incompressible velocity field, $q$ is the pressure term ensuring the divergence-free constraint \eqref{eq:bouss-vel}, $\theta$ denotes density, and $\kappa \ge 0$ as well as $\nu \ge 0$ serve as the diffusivity and viscosity coefficients, respectively. The parameter $\beta^2>0$, also known as the Brunt–Väisälä frequency,
represents the maximum frequency of oscillations for internal gravity waves.
While we refrain from providing an exhaustive derivation in this context (for a detailed derivation, see \cite{rieutord2014fluid}), it is essential to note that the system \eqref{eq:bouss-mom}-\eqref{eq:bouss-vel} effectively characterizes the fluctuations around the stratified Couette flow $(\rho_{\mathrm c} (y), \bu_{\text c}(y), q_{\text c}(y))$ given by
\begin{equation}\label{eq:hydro-couette}
\rho_{\mathrm c}(y)=\bar \rho_c -b y, \qquad
\bu_{\mathrm c}(y)=(y, 0),  \qquad q'_{\mathrm c} (y)=- \mathfrak g \rho_{\mathrm c} (y),
\end{equation}
where $\bar \rho_c > 0$ represents the constant averaged density, $b > 0$ is another constant value, and $\mathfrak g$ corresponds to the gravitational constant.
The stratified Couette flow described in \eqref{eq:hydro-couette} is a steady state solution for the incompressible non-homogeneous Euler-Boussinesq equations 
\begin{align}
\bar \rho_c(\de_t+\tilde\uu\cdot  \nabla)\tilde \uu+\nabla  \tilde q&=- \mathfrak g (0, \rho), \label{eq:mom-euler}\\
(\de_t+\tilde \uu\cdot\nabla)\rho&=0, \label{eq:den-euler}\\
\nabla\cdot \tilde \uu&=0,\label{eq:euler}
\end{align}
where $\rho$ is the fluid density. Equipped with the ansatz
\begin{align}\label{eq:couette}
\rho=\rho_{\mathrm c}+ b \theta, \qquad \tilde \bu=(y, 0)+ \bu, \qquad \tilde q=q_{\mathrm c}+\bar \rho_c q,
\end{align}
and defining
\begin{equation}\label{eq:buoyancy-freq}
\beta^2:= \frac{- \rho_{\mathrm c}'(y) \mathfrak g}{\bar \rho_c} = \frac{b \mathfrak g}{\bar \rho_c},
\end{equation}
equation \eqref{eq:mom-euler} directly leads to \eqref{eq:bouss-mom} with $\nu=0$, while \eqref{eq:bouss-den}, with $\kappa=0$, can be similarly derived from \eqref{eq:den-euler}.
In the general case, the momentum equation \eqref{eq:bouss-mom} includes the term $\nu \Delta \bu$, which models typical viscous effects encountered in Navier-Stokes equations. On the other hand, \eqref{eq:bouss-den} incorporates the diffusivity term $\kappa \Delta \theta$, accounting for dissipation attributed to temperature/salinity variations and meso-scale eddies on the large-scale flow \cite{rieutord2014fluid}.
While in numerous geophysical phenomena, both viscosity and diffusivity can be neglected, resulting in the \emph{inviscid} Boussinesq system around Couette (i.e., \eqref{eq:bouss-mom}-\eqref{eq:bouss-vel} with $\nu=\kappa=0$), laboratory conditions often feature characteristic orders of magnitude: $\kappa \sim 10^{-9} m^2/s$ and $\nu \sim 10^{-6} m^2/s$, see \cite{dauxois18}. Consequently, viscous effects can play a significant role.
While some analysis of the fully dissipative case ($\nu,\kappa>0$) and zero-diffusivity case ($\nu>0$, $\kappa=0$) for system \eqref{eq:bouss-mom}-\eqref{eq:bouss-vel} is provided herein, the primary focus of this note lies on the inviscid case, where $\kappa=\nu=0$.

Our analysis focuses on investigating the decay properties of fluctuations $(\bu,\theta)$ around the stratified Couette flow \eqref{eq:hydro-couette}. In Section \ref{sec:linear}, we primarily focus on the linearized system (the linear part of \eqref{eq:bouss-mom}-\eqref{eq:bouss-vel}), and subsequently, in Section \ref{sec:nonlinear}, we delve into the full nonlinear dynamics. It is worth noting that our approach to studying the linearized system can be extended to encompass other hydrostatic steady states beyond \eqref{eq:hydro-couette}. For example, we can handle shear flows close to Couette as demonstrated in \cite{BCZD20}, resulting in a more expansive set of equations compared to \eqref{eq:bouss-mom}-\eqref{eq:bouss-vel}. However, to maintain conciseness, we refrain from delving into such generalizations in this note.
Introducing the vorticity, denoted as $\omega=\nabla^\perp \cdot \bu=-\de_y u^x+\de_xu^y$, we can express equations \eqref{eq:bouss-mom}-\eqref{eq:bouss-vel} in the vorticity-stream formulation
\begin{align}
		\label{eq:omNLintro}\de_t\omega +y\de_x \omega &=-\beta^2\de_x \theta + \nu\Delta\omega-\bu\cdot \nabla \omega,\\
		\label{eq:thNLintro}\de_t\theta+y\de_x \theta &= \de_x\psi+\kappa\Delta\theta-\bu \cdot \nabla \theta, 
\end{align}
where the stream function $\psi$ satisfies the Poisson equation $\Delta \psi=\omega$. We equip 
\eqref{eq:omNLintro}-\eqref{eq:thNLintro}
with initial data
\begin{align}
    \omega(0, x, y)=\omega^{in}(x,y), \qquad 
    \theta(0, x, y)=\theta^{in}(x,y).
\end{align}
The linear part of this system, corresponding to the linearization of the Boussinesq system around the stratified Couette flow \eqref{eq:hydro-couette}, reads
\begin{align}
		\de_t\omega +y\de_x \omega &=-\beta^2\de_x \theta + \nu\Delta\omega,\label{eq:linBoussOm}\\
		\de_t\theta+y\de_x \theta &= \de_x\psi+\kappa\Delta\theta\label{eq:linBoussTh}.
\end{align}
The stability of the stratified Couette flow \eqref{eq:hydro-couette}, or more explicitly, the spectral stability of the linear system \eqref{eq:linBoussTh}, is ensured by the Miles-Howard criterion \cites{miles1961stability, howard1961note}, which requires $\beta^2> 1/4$. Interestingly, in 1975, Hartman \cite{Hartman75} observed an enstrophy Lyapunov instability with a growth rate of $O(t^{\frac12})$, despite the velocity field experiencing a decay rate of $O(t^{-\frac12})$. In recent years, there have been significant contributions to the rigorous mathematical theory of asymptotic stability of the stratified Couette flow (refer to Section \ref{sec:linear} for a precise definition). In this review, we highlight these developments.

The stability results presented here are explicitly quantified in terms of decay rates, characterized by the phenomena known as \emph{inviscid damping} \cite{BM15}, as well as \emph{enhanced dissipation} \cite{CKRZ08}. Please see Theorem \ref{thm:enhancelin} and Section \ref{sec:viscous} for more details. Furthermore, we provide a (sketch of the) proof of the algebraic growth of vorticity $\omega$ and density gradient $\nabla \theta$, referred to as the shear-buoyancy instability, with a growth rate of $O(t^{\frac12})$, thus confirming Hartman's observation \cite{Hartman75}. This instability emerges from the interplay between non-trivial linear dynamics and the presence of the operator $y \partial_x$ (representing the Couette flow) in \eqref{eq:bouss-mom}-\eqref{eq:bouss-vel}.

It is worth noting that enstrophy is conserved when density remains constant, reducing equations \eqref{eq:bouss-mom}-\eqref{eq:bouss-vel} with $\kappa=\nu=0$ to the Euler equations \cite{BM15}. Similarly, (linear) perturbations of the \emph{hydrostatic rest state} $(\rho_{\mathrm c}, (0,0), q_{\mathrm c})$ with $q_{\mathrm c}'(y)=-\mathfrak g \rho_{\mathrm c}(y)$ are described by the equations
\begin{equation}
\begin{aligned}\label{eq:bouss-vel-rest}
\de_t \bu + (u^y, 0) + \nabla q &= -\beta^2 (0,\theta), \\
\de_t \theta - u^y&=0,\\ 
\nabla \cdot \bu&=0,
\end{aligned}
\end{equation}
which exhibit a purely oscillatory dynamics.

We initiate our discussion by addressing the spectral stability of system \eqref{eq:linBoussOm}-\eqref{eq:linBoussTh}. In the subsequent sections, we will demonstrate that the well-known sufficient condition for \emph{spectral stability} (the Miles-Howard criterion) is a crucial component of our approach for proving asymptotic stability. Specifically, it plays a vital role in establishing the time decay of fluctuations in both the linear (and later, the nonlinear) system, within appropriate Sobolev (for the linear case) and Gevrey (for the nonlinear case) spaces.

\subsection{Spectral stability}
The foundations of the mathematical theory of hydrodynamic stability can be traced back to the 19th century, with groundbreaking contributions from luminaries such as Stokes, Helmholtz, Kelvin, Rayleigh, Reynolds, and G. I. Taylor. This field primarily focuses on assessing the stability of fundamental fluid flow patterns, including vortices and parallel flows. For a comprehensive historical overview, the reader can refer to \cites{DRAZIN19661, drazin1981}, and references therein.

Most of the initial investigations in this field revolved around linear stability theory, often referred to as \emph{spectral stability}. In simple terms, this theory seeks to determine whether infinitesimal initial disturbances resembling waves grow exponentially. To illustrate this concept, consider the abstract linear system
\begin{align*}
\de_t \bU = \mathcal{L}_\mathrm{E} \bU,
\end{align*}
where $\mathcal{L}_\mathrm{E}$ is a linear operator on a Hilbert space $H$, arising from the linearization of a nonlinear system around an equilibrium $\bU_\mathrm{E}$ . The notion of spectral stability can be stated as follows.
\begin{definition}[Spectral stability]
Let $\sigma (\mathcal{L}_\mathrm{E})$ be the spectrum of $\mathcal{L}_\mathrm{E}$ in the Hilbert space $H$. The steady state $\bU_\mathrm{E}$ is \emph{spectrally stable} if $\sigma (\mathcal{L}_\mathrm{E}) \cap \{c \in \mathbb{C}: \mathrm{Re}(c)>0 \} = \emptyset$ and it is \emph{spectrally unstable} if  $\sigma (\mathcal{L}_\mathrm{E}) \cap \{c \in \mathbb{C}: \mathrm{Re}(c)>0 \} \neq \emptyset$.
\end{definition}
In the context of homogeneous, incompressible, and inviscid fluids, characterized by equations \eqref{eq:mom-euler}-\eqref{eq:euler} with $\rho=1$ and $\mathfrak g=0$, and considering parallel shear flows denoted as $\bu_\mathrm{E}=(U(y), 0)$, a classical necessary condition for spectral instability arises from the Rayleigh inflection point theorem. This theorem asserts that $U$ is spectrally unstable if it has a non-degenerate inflection point \cite{DRAZIN19661}.

%For the non-homogeneous incompressible Euler equations \eqref{eq:mom-euler}-\eqref{eq:euler}, particularly in the vicinity of a general hydrostatic rest states characterized by the form
%\begin{align}\label{eq:rest}
%(\rho_{\mathrm E}(y), (0,0), q_{\mathrm E}(y)), \qquad q_\mathrm{E}'(y)=-\mathfrak g\, \rho_{\mathrm E}(y),
%\end{align}
%namely system \eqref{eq:bouss-vel-rest} where $\beta^2$ in \eqref{eq:buoyancy-freq} is replaced by the more general \emph{buoyancy frequency} \red{[se abbiamo Boussinesq, quella sotto è $\bar\rho_c$]}
%\begin{equation}\label{eq:equivalence}
%\quad N^2(y):=- \mathfrak g\frac{ \rho_{\mathrm E}'(y)}{\rho_{\mathrm E}(y)},
%\end{equation}
For the Boussinesq equations around the hydrostatic rest state \eqref{eq:bouss-vel-rest}, 
a \emph{necessary and sufficient condition} for spectral stability is the \emph{stable stratification} assumption, i.e. $\rho_{\mathrm c}'=-b<0$. 
%Inserting the ansatz
%\begin{align}\label{eq:ansatz}
%\rho=\rho_{\mathrm E}+\theta, \quad \tilde \bu=\bu, \quad \tilde q = q_{\mathrm E}+q,
%\end{align}
%into \eqref{eq:bouss-vel-rest} and s
Searching for solutions to \eqref{eq:bouss-vel-rest} of the form 
\begin{align}\label{eq:planewave}
\theta(t, x, y)= \theta(y) \e^{st+ikx}, \quad \bu(t,x,y)=\bu (y) \e^{st+ikx}, \quad q(t,x,y)=q(y)\e^{st+ikx}, 
\end{align}
the linearized operator yields the Rayleigh-Taylor equation
\begin{equation}\label{eq:rayl}
-(u^y(y))'' + k^2 \left(1+\frac{\beta^2}{s^2 }\right) u^y(y)=0,
\end{equation}
with $\beta^2$ in \eqref{eq:buoyancy-freq}. 
Through an energy estimate, as detailed in \cite{gallay2019stability}, one can conclude that hydrostatic rest states exhibit spectral stability when $\rho_{\mathrm c}'(y) \le 0$ (i.e. $b\geq0$) and spectral instability when $\rho_{\mathrm c}'(y) > 0$ (i.e. $b<0$). In the stable scenario, the linearized system around hydrostatic rest states exhibits wave dynamics, where $\beta^2$ is indeed the maximum frequency of oscillations for internal gravity waves.

Consider now the case in which the hydrostatic steady state is not at rest, and the background density is an affine function as in \eqref{eq:hydro-couette}, namely 
\begin{align}\label{eq:rest}
(\rho_{\mathrm c}(y), (U(y),0), q_{\mathrm c}(y)), \qquad q_{\mathrm c}'(y)=-\mathfrak g\, \rho_{\mathrm c}(y).
\end{align}
Defining $\gamma(y):=s+ikU(y)$, the ansatz 
\begin{align}\label{eq:sheargen}
\rho=\rho_{\mathrm c}+ b \theta, \quad \tilde \bu=(U(y), 0)+ \bu, \quad \tilde q=q_{\mathrm c}+\bar \rho_c q,
\end{align}
gives
%
%When the hydrostatic steady state is not at rest as \eqref{eq:rest} but it is rather a \emph{stably stratified shear flow}, i.e. \eqref{eq:rest} with $\bu_v{E}=(v(y), 0)$ replacing $(0,0)$ and assuming $\rho_\mathrm{E}=\bar \rho_c -by$ with $b>0$, using the ansatz \eqref{eq:ansatz} and looking for solutions of the form \eqref{eq:planewave} under the Boussinesq approximation leads to
%
\begin{align}
\bar \rho_c(\gamma(y) u^x(y) + U'(y) u^y(y)) &=-ikq(y), \label{eq:i} \tag{i} \\
\bar \rho_c\gamma(y) u^y(y) &= -  q'(y) - \mathfrak g \theta (y), \label{eq:ii} \tag{ii}\\
\gamma(y) \theta (y)&=b u^y(y), \label{eq:iii} \tag{iii}\\
iku^x (y) + (u^y(y))'&= 0 \quad \text{(divergence-free)}. \tag{iv}
\end{align}
Taking now the derivative in $y$ of \eqref{eq:i} and using the divergence-free condition yields
\begin{align*}
-\frac{ik}{\bar \rho_c}  q'(y)&=\gamma'(y) u^x(y) + \gamma(y) (u^x(y))'+ U''(y) u^y(y) + U'(y) (u^y(y))'\\
&= \frac{i\gamma'(y)}{k} (u^y(y))'+ \frac{i\gamma(y)}{k} (u^y(y))''+U''(y) u^y(y) + U'(y) (u^y(y))',
\end{align*}
while multiplying \eqref{eq:ii} by $ik$ and using \eqref{eq:iii}, we deduce that
\begin{align*}
- \frac{ik}{\bar \rho_c}  q'(y)=i k \gamma(y) u^y(y) + i k \mathfrak g \theta(y)= i k \gamma(y) u^y(y) + \frac{i kb}{\bar \rho_c\gamma(y)} {\mathfrak g} u^y(y). 
\end{align*}
Exploiting the above two identities for $q'$ leads to the \emph{Taylor-Goldstein equation} (with the Boussinesq assumption, see \cite{gallay2019stability} for more explanations) %\red{[do we need to repeat with the Boussinesq approximation all the time?]}),
\begin{align}\label{eq:TG}
-(u^y(y))''+ k^2 u^y(y) + \frac{ik}{\gamma(y)} U''(y) u^y(y) + \frac{k^2 \beta^2}{\gamma^2 (y)} u^y(y)=0.
\end{align}
This is exactly Rayleigh-Taylor equation \eqref{eq:rayl} when $v\equiv0$.
Introducing as in \cite{howard1961note} the new variable
\begin{align}
v(y):=\gamma^{-\frac12}(y)u^y(y),
\end{align}
we obtain 
\begin{align}
\gamma^{\frac12} (u^y(y))''=\gamma^{\frac12}\left(\frac{(\gamma v')}{\gamma^{\frac12}}\right)'  + \frac{\gamma^{\frac12}}{2} \left(\frac{\gamma' v}{\gamma^{\frac12}}\right)'=(\gamma v')' + \frac 12(\gamma' v)' - \frac{\gamma'}{2 \gamma} \left(\gamma v'+ \frac 12 \gamma' v\right). 
\end{align}
Multiplying \eqref{eq:TG} by $\gamma^{\frac12}$ and using the above identity yields 
\begin{align*}
-(\gamma v')'- \frac 12 (\gamma' v)' + \frac{\gamma'}{2\gamma} \left(\gamma v'+ \frac 12 \gamma' v\right) + k^2 \gamma v + {ik}U'' v + \frac{k^2 \beta^2}{\gamma} v=0. 
\end{align*}
Taking the product with the complex conjugate $\bar v$, integrating by parts the first addend, exploiting the cancellations and keeping only the real terms, as $(\gamma')^2=-k^2 (U')^2$, we end up with
\begin{align*}
\mathrm{Re}(s) \int_0^1 |v'|^2  + k^2 |v|^2 + \frac{k^2 (U')^2}{|\gamma|^2} \left(\frac{\beta^2}{(U')^2}-\frac 14\right) |v|^2  =0. 
\end{align*}
We thus deduce that $\mathrm{Re}(s) =0$ (spectral stability) under the Miles-Howard criterion \cite{howard1961note}
\begin{align}
\mathrm{Ri}(y):= \frac{\beta^2}{(U'(y))^2} > \frac 14. 
\end{align}
%%%%%
In the case of the stably stratified Couette flow \eqref{eq:hydro-couette}, where $U'\equiv1$, 
the Brunt–Väisälä frequency and the Richardson number coincide, giving
\begin{equation}\label{eq:Rich}
\beta^2 > \frac 14.
\end{equation}
\begin{remark}
The threshold $1/4$ is optimal but the Miles-Howard criterion is only a \emph{sufficient condition} for spectral stability: notice indeed that in the homogeneous case $\rho=1$, any shear flow without inflection point is spectrally stable by Rayleigh's theorem. 
 \end{remark}
 
% Spectral stability is not the only available notion of linear stability.
% 
%
%%%%%%
%\subsection{Asymptotic stability}
%%%%%
\section{Linearized asymptotic stability}\label{sec:linear}

There are several ways to study the longtime dynamics of \eqref{eq:EulerBmove}, some of which involve finding an explicit solution in terms of hypergeometric functions \cite{YL18} 
or Whittaker functions \cites{CZN23strip,CZN23chan}, at least when $\nu=\kappa=0$. The approach developed in \cite{BCZD20} is instead based on an energy method, which well adapts to the nonlinear
setting \cite{BBCZD21}, or the viscous one as we show below.

For any function $f:\TT\times\RR\to\RR$, we use the notation
$$
f_{0}=\frac{1}{2\pi}\int_{\TT}f(x,\cdot)\dd x, \qquad f_{\neq}=f-f_0.
$$
At the linear level, the zero mode in $x$ does not play any significant role in the dynamics of \eqref{eq:linBoussOm}-\eqref{eq:linBoussTh}, as it is simply conserved (when $\nu=\kappa=0$) or diffused (when $\nu,\kappa>0$).
The asymptotic stability in the inviscid case $\nu=\kappa=0$ can be phrased as in the following result. The basic assumption is the spectral stability condition \eqref{eq:Rich}, and the notation is
\begin{theorem}[Linear inviscid damping and instability, \cite{BCZD20}]\label{thm:damplin}
Let $\beta>1/2$, and define
\begin{equation}\label{eq:Cbeta}
C_\beta:= \left[\frac{2\beta+1}{2\beta-1}\exp\left(  \frac{1}{  2\beta-1}\right)\right]^{\frac12}.
\end{equation}
Then there hold the  linear inviscid damping estimates
%	\begin{align}
%	\norm{\theta_{\neq}(t)}_{L^2}+\norm{u_{\neq}^x(t)}_{L^2}\lesssim&\frac{1}{\l t\r^{\frac12}}\left[\norm{\omega_{\neq}^{in}}_{H^{-1}_xL^2_y}+\norm{\theta_{\neq}^{in}}_{L^2_xH^1_y}\right], \qquad \forall t\geq0,\label{eq:main-lin1}\\
%	\norm{u^y(t)}_{L^2}\lesssim& \frac{1}{\langle t \rangle^{\frac32}}\left[\norm{\omega_{\neq}^{in}}_{H^{-1}_xH^1_y}+\norm{\theta_{\neq}^{in}}_{L^2_xH^2_y}\right],  \qquad \forall t\geq0.\label{eq:main-lin2}
%	\end{align}
%	and
%	\begin{equation}
%\label{bd:Lyomega}
%\norm{\omega_{\neq}(t)}_{L^2}+\norm{\nabla\theta_{\neq}(t)}_{L^2}\gtrsim\langle t \rangle^{\frac12}\left[\norm{\omega_{\neq}^{in}}_{L^2_x H^{-1}_y}+\norm{\theta_{\neq}^{in}}_{H^1_x L^2_y}\right], \qquad \forall t\geq0.
%\end{equation}
	\begin{align}
	\norm{\theta_{\neq}(t)}_{L^2}+\norm{u_{\neq}^x(t)}_{L^2}\lesssim\, & C_\beta\l t\r^{-\frac12}\left[\norm{\omega_{\neq}^{in}}_{L^2}+\norm{\theta_{\neq}^{in}}_{H^1}\right],  \label{eq:main-lin1}\\
	\norm{u^y(t)}_{L^2}\lesssim\, & C_\beta \langle t \rangle^{-\frac32}\left[\norm{\omega_{\neq}^{in}}_{H^1}+\norm{\theta_{\neq}^{in}}_{H^2}\right],  \label{eq:main-lin2}
	\end{align}
	and the shear-buoyancy instability estimate
	\begin{equation}
\label{bd:Lyomega}
\norm{\omega_{\neq}(t)}_{L^2}+\norm{\nabla\theta_{\neq}(t)}_{L^2}\gtrsim \frac{1}{C_\beta}\langle t \rangle^{\frac12}\left[\norm{\omega_{\neq}^{in}}_{H^{-1}}+\norm{\theta_{\neq}^{in}}_{L^2}\right],  
\end{equation}
for every $t\geq 0$.
\end{theorem}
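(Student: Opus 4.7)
The plan is to reduce \eqref{eq:linBoussOm}--\eqref{eq:linBoussTh} with $\nu=\kappa=0$ to a one-parameter family of scalar ODEs along the characteristics of the transport $y\de_x$, and then extract sharp pointwise bounds mode by mode. First I would restrict to the nonzero $x$-modes (the zero modes play no role at the linear level) and take the Fourier transform in $(x,y)$, so that $y\de_x$ becomes the transport $-k\de_\xi$. Following these characteristics, define the shifted profiles $\tilde\omega(t,k,\xi_0):=\hat\omega(t,k,\xi_0-kt)$ and $\tilde\theta(t,k,\xi_0):=\hat\theta(t,k,\xi_0-kt)$ together with the critical-time variable $\sigma:=t-\xi_0/k$. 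Using the Biot--Savart relation $\hat\psi=-\hat\omega/(k^2+\xi^2)$, the system decouples in $(k,\xi_0)$ into
\[
\de_\sigma\tilde\omega=-ik\beta^2\tilde\theta,\qquad\de_\sigma\tilde\theta=-\frac{i}{k(1+\sigma^2)}\tilde\omega,
\]
and eliminating $\tilde\theta$ yields the Legendre-type scalar equation
\[
\de_\sigma^2\tilde\omega+\frac{\beta^2}{1+\sigma^2}\tilde\omega=0.
\]

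For $\beta>1/2$ this ODE admits two linearly independent solutions asymptotic to $|\sigma|^{1/2\pm i\mu}$ as $|\sigma|\to\infty$, with $\mu:=\sqrt{\beta^2-1/4}$ real: the analytic shadow of the Miles--Howard criterion. The heart of the argument is then a weighted energy estimate capturing the $\langle\sigma\rangle^{1/2}$ asymptotic growth with the correct constant. The natural energy $\cE_0(\sigma):=(1+\sigma^2)|\de_\sigma\tilde\omega|^2+\beta^2|\tilde\omega|^2$ satisfies $\de_\sigma\cE_0=2\sigma|\de_\sigma\tilde\omega|^2$, and a crude Gr\"onwall estimate only yields $\cE_0(\sigma)\leq(1+\sigma^2)\cE_0(0)$, hence $|\tilde\omega(\sigma)|\lesssim\langle\sigma\rangle$, off by a factor $\langle\sigma\rangle^{1/2}$. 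To sharpen this to the optimal $\langle\sigma\rangle^{1/2}$ growth with the explicit constant $C_\beta$ of \eqref{eq:Cbeta}, I would correct $\cE_0$ by a skew cross-term of the form $c(\sigma)\Re(\de_\sigma\tilde\omega\,\bar{\tilde\omega})$, tuned so that the resulting functional is coercive for $\beta>1/2$ and satisfies a differential inequality whose integration produces the prefactor $[(2\beta+1)/(2\beta-1)]\exp[1/(2\beta-1)]$. This yields the pointwise bounds
\[
|\tilde\omega(\sigma)|\lesssim C_\beta\langle\sigma\rangle^{1/2}\bigl(|\tilde\omega(0)|+|k|\beta|\tilde\theta(0)|\bigr),\qquad |\tilde\theta(\sigma)|\lesssim C_\beta\langle\sigma\rangle^{-1/2}(\cdots),
\]
together with analogous bounds for $\tilde u^x$ and $\tilde u^y$ via Biot--Savart; in particular $\tilde u^y=i(k(1+\sigma^2))^{-1}\tilde\omega$ picks up an additional factor $\langle\sigma\rangle^{-2}$.

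Finally, these pointwise-in-Fourier bounds are assembled by Plancherel into the stated $L^2$ estimates. Since $\sigma$ and $t$ differ only by the $\xi_0$-dependent shift $\xi_0/k$, the pointwise decay $\langle\sigma\rangle^{-1/2}$ of $|\tilde\theta|$ translates to the $\langle t\rangle^{-1/2}$ decay of $\|\theta_{\neq}\|_{L^2}$ after integration in $\xi_0$, once one extra derivative on the initial data is used to control the high-frequency region $|\xi_0|\gg|k|t$; this explains the $H^1$ and $H^2$ weights in \eqref{eq:main-lin1}--\eqref{eq:main-lin2}. The stronger $\langle t\rangle^{-3/2}$ decay of $\|u^y\|_{L^2}$ follows by the same argument combined with the extra $\langle\sigma\rangle^{-2}$ Biot--Savart factor. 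The shear-buoyancy lower bound \eqref{bd:Lyomega} is obtained by reversing the energy argument: generic initial data excite both Legendre solutions, so the matching pointwise lower bound $|\tilde\omega(\sigma)|\gtrsim C_\beta^{-1}\langle\sigma\rangle^{1/2}(|\tilde\omega(0)|+\cdots)$ holds and integrates in $(k,\xi_0)$ to the $\langle t\rangle^{1/2}$ growth. The main obstacle is the tuning of the skew-corrected energy so that the constant $C_\beta$ emerges exactly as in \eqref{eq:Cbeta}: the denominator $2\beta-1$ in the exponent encodes the Wronskian of the two Legendre solutions together with their phase rotation at rate $\mu$, and executing this numerical bookkeeping (rather than the abstract strategy) is where the real technical weight of the proof lies.
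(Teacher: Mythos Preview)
Your approach is correct and, at bottom, the same as the paper's: a mode-by-mode energy argument in the moving frame with a skew cross-term that is coercive precisely when $\beta>1/2$. The one difference is presentational, and it resolves exactly the obstacle you flag at the end. Rather than eliminating $\tilde\theta$, passing to the second-order scalar ODE, and then searching for an unspecified correction $c(\sigma)\Re(\de_\sigma\tilde\omega\,\bar{\tilde\omega})$ to $\cE_0$, the paper stays with the first-order system and introduces the \emph{symmetric variables}
\[
Z:=(p/k^2)^{-1/4}\hat\Omega,\qquad Q:=(p/k^2)^{1/4}ik\beta\hat\Theta,
\]
where in your notation $p/k^2=1+\sigma^2$. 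In these variables the system is antisymmetric off the diagonal, with diagonal terms $\mp\tfrac14\de_tp/p$. The functional
\[
E=\tfrac12\bigl(|Z|^2+|Q|^2\bigr)+\tfrac{1}{4\beta}\,\tfrac{\de_tp}{|k|p^{1/2}}\,\Re(Z\bar Q)
\]
is coercive with ratio $(2\beta+1)/(2\beta-1)$ (because $|\de_tp/(|k|p^{1/2})|\le2$) and satisfies $|\de_tE|\le\tfrac{1}{2(2\beta-1)}\bigl|\de_t\bigl(\de_tp/|k|p^{1/2}\bigr)\bigr|\,E$; integrating the Gr\"onwall factor yields $C_\beta^2$ directly as the product of the coercivity ratio and $\exp(1/(2\beta-1))$. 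This is your skew-corrected energy after undoing the $p^{\pm1/4}$ weights (indeed $\cE_0=\beta^2(1+\sigma^2)^{1/2}(|Z|^2+|Q|^2)$, which explains why your uncorrected $\cE_0$ is off by $\langle\sigma\rangle^{1/2}$), but the symmetrization makes the ``numerical bookkeeping'' entirely mechanical. It also carries over unchanged to the dissipative case $\nu,\kappa>0$ and to other coupled systems, where reducing to a single second-order ODE is less convenient.
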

The symbols $\lesssim$ and $\gtrsim$ only hide absolute constants, independent of $\beta$.
In stark contrast to the homogeneous Couette flow \cite{BM15},   the system undergoes a Lyapunov \emph{instability} \eqref{bd:Lyomega}.
This can be considered the reason why the decay rates in \eqref{eq:main-lin1}-\eqref{eq:main-lin2} are slower by a factor of $t^{\frac12}$, compared to those that
can be obtained in the constant density case. From a physical viewpoint, density stratification induces creation of vorticity and hence an algebraic growth in $L^2$. Nonetheless, 
density is not simply transported (hence conserved in $L^2$), but decays due to buoyancy mechanism.  

 \begin{table}[h]
    \centering
\begin{tabularx}{\columnwidth}{XX}
\includegraphics[width=\linewidth]{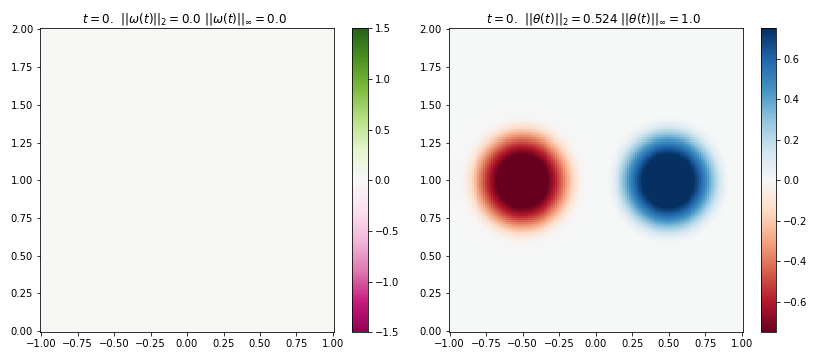}
% \captionof{figure}{Figure A}\label{fig:taba}
    &   \includegraphics[width=\linewidth]{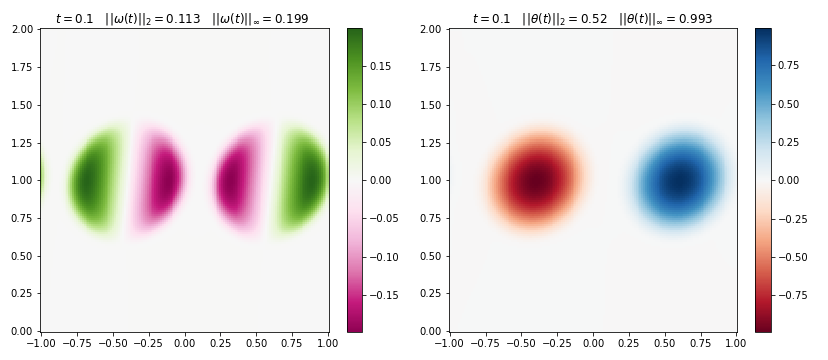}   
        % \captionof{figure}{Figure B}\label{fig:tabb}       
        \\
\includegraphics[width=\linewidth]{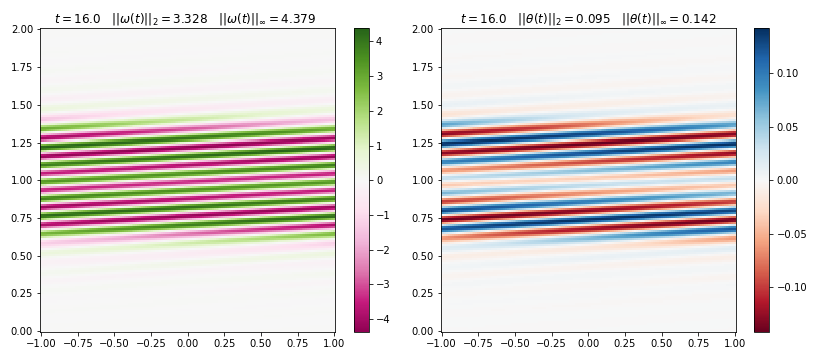}
% \captionof{figure}{Figure C}\label{fig:tabc}
    &  $\qquad$ \includegraphics[width=.8\linewidth]{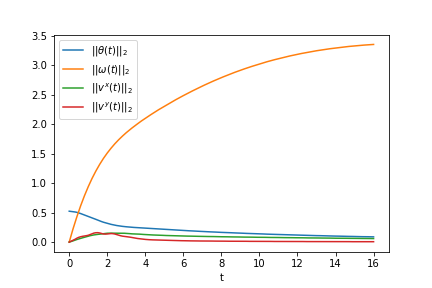} 
        % \captionof{figure}{Figure D}\label{fig:tabd}               
\end{tabularx}
\captionof{figure}{Numerical simulation of the linearized system with $\nu=\kappa=0$. On the top-left the initial data with zero vorticity perturbation (the background vorticity and density are not plotted). On the top-right we see the evolution at time $t=0.1$, where dipoles are created to restore the equilibrium state (heavier density at the bottom). The bottom-left figure shows the evolution at time $t=16$, where the effect of shearing is evident. Observe that the density is fading out at the center, which is in stark contrast compared to the standard advection by the Couette flow. The colors of these three figures are not scaled with magnitude for visualization reasons. Finally, on the bottom-right we see a plot of the $L^2$ norms of density, vorticity and velocity fields components. Notice the $\sqrt{t}$ behavior of the vorticity  (the yellow curve).}
\label{tab:mytable}
    \end{table}

This mechanism affects also the dissipative behavior of the equations, which we firstly analyze  in the case when $\nu,\kappa>0$ are comparable.

%\red{[check position of picture]}
\vspace{5cm}

\begin{theorem}[Linear enhanced dissipation]\label{thm:enhancelin}
Let $\beta>1/2$,  assume that $\nu,\kappa>0$ satisfy
\begin{equation}\label{eq:kappanu}
    \frac{\max\{\nu,\kappa\}}{\min\{\nu,\kappa\}}<4\beta-1,
\end{equation}
and define the strictly positive number
\begin{equation}
\lambda_{\nu, \kappa}:=\min\{\nu,\kappa\}\left(1-\frac{1}{4\beta}-\frac{1}{4\beta}\frac{\max\{\nu,\kappa\}}{\min\{\nu,\kappa\}}\right).
\end{equation}
Then
	\begin{equation}\label{eq:Linenhanced}
\norm{\omega_{\neq}(t)}_{L^2} +\langle t \rangle\norm{\theta_{\neq}(t)}_{L^2} \lesssim C_\beta\langle t \rangle^{\frac12}\e^{- \frac{1}{24 } \lambda_{\nu,\kappa}  k^2t^3 }\left[\norm{\omega_{\neq}^{in}}_{L^2 }+\norm{\theta_{\neq}^{in}}_{H^1}\right], 
\end{equation}
for every $t\geq 0$.
\end{theorem}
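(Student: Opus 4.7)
The plan is to extend the symmetrization energy method of \cite{BCZD20} (used to prove Theorem \ref{thm:damplin}) to the dissipative setting. First, I would move to the frame of the Couette flow via $\Omega(t,x,y):=\omega(t,x+ty,y)$ and $\Theta(t,x,y):=\theta(t,x+ty,y)$, and take Fourier transforms. For each non-zero $x$-frequency $k$ and dual variable $\eta$ to $y$, setting $p(t,k,\eta):=k^2+(\eta-kt)^2$ and using $\widehat\Psi=-\widehat\Omega/p$, the linearized system reduces to the non-autonomous ODE
\begin{align*}
\de_t\widehat\Omega &= -ik\beta^2\,\widehat\Theta - \nu\,p(t)\,\widehat\Omega,\\
\de_t\widehat\Theta &= -\frac{ik}{p(t)}\,\widehat\Omega - \kappa\,p(t)\,\widehat\Theta,
\end{align*}
the zero $x$-mode being trivially handled by pure heat diffusion.

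Next, following \cite{BCZD20}, I would introduce the symmetric good unknowns $Z:=\widehat\Omega$ and $T:=\beta\,p(t)^{1/2}\widehat\Theta$, which render the coupling matrix anti-Hermitian with constant $-ik\beta\,p^{-1/2}$; this is precisely the Howard change of variable behind the Miles-Howard derivation of \eqref{eq:Rich}. A direct computation yields the base identity
\begin{equation*}
\frac{d}{dt}\bigl(|Z|^2+|T|^2\bigr) = \frac{\dot p}{p}|T|^2 - 2\nu\,p\,|Z|^2 - 2\kappa\,p\,|T|^2,
\end{equation*}
and the crux is to dispose of the sign-indefinite term $(\dot p/p)|T|^2$ (with $\dot p=-2k(\eta-kt)$), which flips sign at the critical time $t=\eta/k$. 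To this end I would augment the energy with a cross correction $c(t)\,\mathrm{Im}(\bar Z T)$, with weight $c(t)$ essentially proportional to $\dot p/(k\beta\,p^{1/2})$: the anti-Hermitian coupling ensures that $\frac{d}{dt}\mathrm{Im}(\bar Z T)$ contains a term $k\beta\,p^{-1/2}(|Z|^2-|T|^2)$ exactly of the form needed to absorb the bad contribution. The Miles-Howard hypothesis $\beta>1/2$ makes $c(t)$ stay below the coercivity threshold so that the augmented energy $E(t)$ remains equivalent to $|Z|^2+|T|^2$ with constants controlled by $C_\beta$ from \eqref{eq:Cbeta}; the compatibility assumption $\max\{\nu,\kappa\}/\min\{\nu,\kappa\}<4\beta-1$ then emerges as the precise requirement under which, after Young's inequality on the residual cross-dissipative terms stemming from $\dot c(t)\,\mathrm{Im}(\bar Z T)$ and the mismatched viscosities, the leftover dissipation remains positive with rate $\lambda_{\nu,\kappa}\,p(t)$, yielding the key differential inequality
\begin{equation*}
\frac{d}{dt}E(t) \leq -\lambda_{\nu,\kappa}\,p(t)\,E(t).
\end{equation*}

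Integrating and using the elementary lower bound $\int_0^t p(s)\,ds\geq k^2 t^3/12$ (attained at $\eta=kt/2$) gives $E(t)\lesssim C_\beta^2\,E(0)\,e^{-\lambda_{\nu,\kappa}k^2t^3/12}$, and passing to the square root yields the announced rate $e^{-\lambda_{\nu,\kappa}k^2 t^3/24}$. Since $E(0)$ equals $|\widehat\omega^{in}|^2+\beta^2(k^2+\eta^2)|\widehat\theta^{in}|^2$, Plancherel recovers the initial data norm $\norm{\omega^{in}}_{L^2}+\norm{\theta^{in}}_{H^1}$ of the statement; converting $T$ back to $\widehat\theta$ contributes the weight $p(t)^{-1/2}\lesssim\langle t\rangle^{-1}$ for generic $\eta$, producing the $\langle t\rangle$ multiplier in front of $\norm{\theta_{\neq}}_{L^2}$ on the left-hand side. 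The pre-factor $\langle t\rangle^{1/2}$ on the right-hand side reflects the shear-buoyancy algebraic growth \eqref{bd:Lyomega} already present in the inviscid case, and is absorbed by splitting $e^{-\lambda_{\nu,\kappa}\int p}$ in two and using the second half to dominate $\langle t\rangle^{1/2}$. The main obstacle I foresee is the precise algebraic tuning of the weight $c(t)$: one must balance the $\dot c(t)$ contribution against the mismatched heat terms through Young's inequality in a way that isolates the spectral gap $\beta^2-1/4$ above the compatibility threshold, a delicate but explicit computation that pins down both the ratio $4\beta-1$ and the numerical constant $1/24$ in the exponent.
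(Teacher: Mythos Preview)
There is a genuine gap in your choice of symmetric variables. You take $Z=\widehat\Omega$ and $T=\beta\,p^{1/2}\widehat\Theta$, which does make the coupling anti-Hermitian, but the diagonal error it produces is $\tfrac{\dot p}{p}|T|^2$---asymmetric, carried by $T$ alone. The cross correction $c(t)\,\mathrm{Im}(\bar Z T)$ contributes a term proportional to $|T|^2-|Z|^2$, which is \emph{antisymmetric} in $Z,T$. No scalar $c(t)$ can cancel an asymmetric term with an antisymmetric one: whatever you choose, a residual of the form $\tfrac{\dot p}{p}$ times a definite combination of $|Z|^2,|T|^2$ survives, and since $\int_0^\infty|\dot p/p|\,dt=\infty$ (logarithmic divergence) this cannot be absorbed. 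A quick sanity check confirms the failure: in the inviscid limit your functional $|Z|^2+|T|^2=|\widehat\Omega|^2+\beta^2 p\,|\widehat\Theta|^2$ grows like $t$ (both summands do, by Theorem~\ref{thm:damplin}), so the inequality $\dot E\le -\lambda_{\nu,\kappa}\,p\,E$ you aim for cannot hold uniformly as $\nu,\kappa\to 0$.

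The paper's remedy is to use the \emph{balanced} weights $Z=(p/k^2)^{-1/4}\widehat\Omega$ and $Q=(p/k^2)^{1/4}ik\beta\,\widehat\Theta$. Because the two weights are reciprocals, the diagonal errors become $\mp\tfrac14\tfrac{\dot p}{p}$ on $Z$ and $Q$ respectively---i.e.\ proportional to $|Q|^2-|Z|^2$, exactly matching the antisymmetric output of the cross term $\mathrm{Re}(Z\bar Q)$. This is the content of requirement~(b) in the remark following Proposition~\ref{prop:ellisse}. After this cancellation the only inviscid residual is $\tfrac{1}{4\beta}\partial_t\bigl(\dot p/(|k|p^{1/2})\bigr)\mathrm{Re}(Z\bar Q)$, whose coefficient has bounded total variation (hence contributes the factor $C_\beta$), and the mismatched dissipative cross term is handled by $|\dot p/(|k|p^{1/2})|\le 2$ and coercivity, producing $-\lambda_{\nu,\kappa}\,p$ under \eqref{eq:kappanu}. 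Finally, the $\langle t\rangle^{1/2}$ prefactor comes from undoing the weight, $|\widehat\Omega|=(p/k^2)^{1/4}|Z|$, not from splitting the exponential; and the gain on $\theta$ comes from $(p/k^2)^{-1/4}$, which is \emph{not} pointwise $\lesssim\langle t\rangle^{-1}$ (it equals $1$ at $\eta=kt$) but only after trading a derivative via $\langle t\rangle\lesssim\langle\eta/k\rangle\,(p/k^2)^{1/2}$.
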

Similar estimates on the velocity $(u^x_{\neq}, u^y)$ can be obtained, since the proof of this theorem is based on precise pointwise estimates in the Fourier side. In particular,
when setting $\nu=\kappa$ and then sending them to 0, one recovers all the estimates in Theorem \ref{thm:damplin}.

\begin{remark}
The condition \eqref{eq:kappanu} is the same identified in \cite{CZDZ23} to prove enhanced dissipation in the three-dimensional Boussinesq equations. It reduces to \eqref{eq:Rich} when $\nu=\kappa$.
\end{remark}

\subsection{A new symmetrization scheme}
\label{sec:symm}
The transport structure of system \eqref{eq:linBoussOm}-\eqref{eq:linBoussTh} suggests the linear change of variable
\begin{align}\label{eq:changelin}
z=x-yt,
\end{align}
and the consequent redefinition of the unknowns involved as
\begin{align}
\Omega(t,z,y)=\omega(t,x,y),\qquad \Theta(t,z,y)=\theta(t,x,y),\qquad \Psi(t,z,y)=\psi(t,x,y).
\end{align}
In this moving frame, the Laplace operator becomes $ \Delta_L:=\de_{zz}+(\de_y-t\de_z)^2$ and the equations  \eqref{eq:linBoussOm}-\eqref{eq:linBoussTh}  take the form
\begin{align}\label{eq:EulerBmove}
 \de_t \Omega =-\beta^2\de_z \Theta+ \nu\Delta_L\Omega, \qquad \de_t \Theta=\de_z \Psi+\kappa\Delta_L\Theta .
\end{align}
along with the relation $ \Delta_L\Psi = \Omega$.

Passing to Fourier variables $\TT\times\RR\ni (z,y)\mapsto(k,\eta)\in \ZZ\times \RR$ and denoting the symbol associated to $-\Delta_L$ as
\begin{align}\label{def:p}
	p(t,k,\eta)=k^2+(\eta-k t)^2, \qquad \de_tp(t,k,\eta)=-2k(\eta-k t),
\end{align} 
we arrive at the system
\begin{align}\label{eq:BoussinesqMoveFlin}
		\de_t\hOmega =-i\beta^2k \widehat{\Theta} -\nu p \hOmega,\qquad \de_t\hTheta= -\frac{ik}{p}\hOmega-\kappa p \hTheta.
\end{align}
The idea is to symmetrize this system via time-dependent Fourier multipliers and use an energy functional for the new auxiliary variables
\begin{equation}\label{eq:Z1Z2couette}
Z(t,k,\eta):= \left(p/k^2\right)^{-\frac{1}{4}} \hOmega(t,k,\eta), \qquad Q(t,k,\eta):=\left(p/k^2\right)^\frac{1}{4}i  k\beta\hTheta(t,k,\eta).
\end{equation}
for which  \eqref{eq:BoussinesqMoveFlin} takes the particularly amenable form
\begin{align}
		\de_tZ =-\dfrac{1}{4}\dfrac{\de_tp}{p}Z-|k|\beta p^{-\frac{1}{2}}Q-\nu p Z,\qquad \de_tQ=\dfrac{1}{4}\dfrac{\de_tp}{p}Q+|k|\beta p^{-\frac{1}{2}}Z-\kappa p Q.\label{eq:Qlin}
\end{align}
Point-wise in frequency, define the energy functional 
\begin{align}\label{def:pointwise-functional-Couette}
E(t)=\frac12\left[|Z(t)|^2+|Q(t)|^2+\frac{1}{2\beta}\frac{\de_tp} {|k|p^{\frac12}}  \Re  \left(Z(t) \overline{Q(t)}\right)\right].
\end{align}
Since $|\de_t p/(kp^{\frac12})|\leq 2$, the energy   is coercive for $\beta>1/2$ with
\begin{align}\label{eq:coercive-pointwise}
\frac12\left(1-\frac{1}{2\beta}\right)\left[|Z|^2+|Q|^2\right] \leq E \leq\frac12\left(1+\frac{1}{2\beta}\right)\left[|Z|^2+|Q|^2\right],
\end{align}
and   satisfies the energy identity
\begin{align}\label{eq:energqueq}
\ddt E=\frac{1}{4\beta}\de_t\left(\frac{\de_tp} {|k|p^{\frac12}}\right)\Re\left(Z\overline{Q}\right) - \nu p |Z|^2 - \kappa p|Q|^2
-\frac{\nu+\kappa}{4\beta}\frac{\de_tp} {|k|p^{\frac12}}  p  \Re  \left(Z \overline{Q}\right) .
\end{align}
From this, we can infer the key information needed to prove Theorems \ref{thm:damplin}-\ref{thm:enhancelin}.

\begin{proposition}\label{prop:ellisse}
Let $\beta>1/2$.
If $\nu=\kappa=0$, then for every $t\geq 0$ there holds
\begin{equation}\label{eq:energbelowabove}
\frac{1}{C^2_\beta}\left[|Z(0)|^2+|Q(0)|^2\right] \leq |Z(t)|^2+|Q(t)|^2 \leq C^2_\beta\left[|Z(0)|^2+|Q(0)|^2\right],
\end{equation}
where $C_\beta$ is given in \eqref{eq:Cbeta}.
If $\nu,\kappa>0$ comply with \eqref{eq:kappanu},then  there holds 
\begin{equation} \label{eq:energenhanced}
|Z(t)|^2+|Q(t)|^2\leq C^2_\beta\e^{- \frac{1}{12 } \lambda_{\nu,\kappa}  k^2t^3 } \left[|Z(0)|^2+|Q(0)|^2\right] ,
\end{equation}
for every $t\geq 0$.
\end{proposition}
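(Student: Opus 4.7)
The plan is to prove both estimates by working at the level of the energy $E$ defined in \eqref{def:pointwise-functional-Couette} and then converting back to the natural variables via the coercivity bound \eqref{eq:coercive-pointwise}, which is valid precisely because $\beta>1/2$. The structural input from the symmetrization is that the multiplier $\de_tp/(|k|p^{\frac12})$ has modulus at most $2$, so indefinite terms in the energy identity \eqref{eq:energqueq} can always be absorbed by $|Z|^2+|Q|^2$, hence by $E$ itself.

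For the inviscid case $\nu=\kappa=0$, I would start by evaluating the time derivative of the multiplier using $p=k^2+(\eta-kt)^2$ and $\de_t^2p=2k^2$, which gives
\begin{equation*}
\de_t\!\left(\frac{\de_tp}{|k|p^{\frac12}}\right)=\frac{2|k|^3}{p^{\frac32}}.
\end{equation*}
Inserting this in \eqref{eq:energqueq} and bounding $|\Re(Z\overline{Q})|\leq(|Z|^2+|Q|^2)/2$, the lower coercivity from \eqref{eq:coercive-pointwise} turns the right-hand side into the pointwise differential inequality
\begin{equation*}
\left|\ddt E\right|\leq\frac{|k|^3}{(2\beta-1)\,p^{\frac32}}\,E.
\end{equation*}
Gronwall yields $E(t)/E(0)\leq\exp\!\big(\tfrac{1}{2\beta-1}\!\int_0^t |k|^3 p^{-\frac32}\dd s\big)$, and the change of variable $u=\eta-ks$ reduces the time integral to $\int_{\RR}k^2(k^2+u^2)^{-\frac32}\dd u$, a finite absolute constant (explicitly computed via the antiderivative $u/(k^2+u^2)^{\frac12}$). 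Applying \eqref{eq:coercive-pointwise} on both sides then produces \eqref{eq:energbelowabove}, with $C_\beta^2$ absorbing the coercivity prefactor $(2\beta+1)/(2\beta-1)$ together with the Gronwall exponential. The lower bound in \eqref{eq:energbelowabove} follows by the same argument applied to $E(0)/E(t)$.

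For the dissipative case, the reactive term is treated as above, and the new ingredients in \eqref{eq:energqueq} are the diagonal dissipation $-\nu p|Z|^2-\kappa p|Q|^2$ and the indefinite cross viscous term proportional to $-\tfrac{\nu+\kappa}{4\beta}\,\alpha\, p\,\Re(Z\overline{Q})$ with $\alpha:=\de_tp/(|k|p^{\frac12})$. Using $|\alpha|\leq 2$ and Cauchy--Schwarz bounds this cross term by $\tfrac{\nu+\kappa}{4\beta}\,p\,(|Z|^2+|Q|^2)$, while the diagonal part satisfies $\nu p|Z|^2+\kappa p|Q|^2\geq\min\{\nu,\kappa\}\,p\,(|Z|^2+|Q|^2)$. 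The net viscous contribution is therefore $-\lambda_{\nu,\kappa}\,p\,(|Z|^2+|Q|^2)$, which is genuinely dissipative exactly under the hypothesis \eqref{eq:kappanu}. Recycling coercivity to re-express $|Z|^2+|Q|^2$ in terms of $E$ gives
\begin{equation*}
\ddt E\leq\frac{|k|^3}{(2\beta-1)\,p^{\frac32}}\,E\,-\,\frac{4\beta\,\lambda_{\nu,\kappa}}{2\beta+1}\,p\,E.
\end{equation*}
A direct Gronwall integration, together with the elementary lower bound $\int_0^t p(s)\dd s\geq k^2 t^3/12$ (whose worst case $\eta/k=t/2$ pins down the constant $1/12$), then delivers the enhanced dissipation factor of \eqref{eq:energenhanced} after one more application of coercivity.

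The main obstacle is the indefinite cross term $\Re(Z\overline{Q})$: it is of the same order as the diagonal quadratic expression and could a priori break both the coercivity of $E$ and the sign of the net dissipation. The whole scheme works because the prefactor of this cross term is $O(1/\beta)$, so $\beta>1/2$ secures the coercivity \eqref{eq:coercive-pointwise}, while in the viscous setting \eqref{eq:kappanu} ensures that the mixed viscous contribution is strictly dominated by the diagonal one. Threading this balance, and carefully tracking the constants in the antiderivative of $|k|^3/p^{\frac32}$ to match the sharp exponent $1/(2\beta-1)$ appearing in $C_\beta$, is the technical heart of the argument.
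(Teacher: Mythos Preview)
Your proposal is correct and follows essentially the same approach as the paper: both derive the same differential inequality for $E$ from \eqref{eq:energqueq} and \eqref{eq:coercive-pointwise}, then integrate via Gr\"onwall and convert back through the coercivity bounds. The only cosmetic difference is that the paper keeps the reactive factor written as the total derivative $\de_t\big(\de_tp/(|k|p^{1/2})\big)$ and integrates it directly using $|\de_tp/(|k|p^{1/2})|\leq 2$, whereas you compute its explicit form $2|k|^3/p^{3/2}$ and integrate by the substitution $u=\eta-ks$---these are literally the same calculation.
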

\begin{proof}
When $\nu=\kappa=0$, identity \eqref{eq:energqueq} and the coercivity \eqref{eq:coercive-pointwise} imply that 
\begin{equation} 
-\frac{1}{2(2\beta-1)}\de_t\left(\frac{\de_tp} {|k|p^{\frac12}}\right) E  \leq \ddt E\leq \frac{1}{2(2\beta-1)}\de_t\left(\frac{\de_tp} {|k|p^{\frac12}}\right) E.
\end{equation}
Integrating the inequality, using again that $|\de_t p/(kp^{\frac12})|\leq 2$ and the coercivity \eqref{eq:coercive-pointwise},  we obtain precisely \eqref{eq:energbelowabove}.
Now, when $\nu,\kappa>0$, we observe that
\begin{align}
\frac{\nu+\kappa}{4\beta}\frac{\de_tp} {|k|p^{\frac12}}  p  \Re  \left(Z \overline{Q}\right) \leq \frac{\nu+\kappa}{4\beta} p\left[|Z|^2+|Q|^2\right],
\end{align}
Thanks to \eqref{eq:kappanu}, the constants 
\begin{equation}
\lambda_\nu:= \nu -  \frac{\nu+\kappa}{4\beta} , \qquad \lambda_\kappa:= \kappa -  \frac{\nu+\kappa}{4\beta}, \qquad \lambda_{\nu, \kappa} =\min \{\lambda_\nu,\lambda_\kappa\}
\end{equation}
are all strictly positive, 
and the energy identity \eqref{eq:energqueq} implies
\begin{align} 
\ddt E
\leq \frac{1}{2(2\beta-1)}\de_t\left(\frac{\de_tp} {|k|p^{\frac12}}\right) E- \frac{4\beta}{2\beta +1 }\lambda_{\nu,\kappa} p E,
\end{align}
so that
\begin{align} 
E(t)\leq \exp\left(\frac{1}{ 2\beta-1 } \right)\exp\left(- \frac{\beta}{3(2\beta +1) }\lambda_{\nu,\kappa} k^2t^3 \right)E(0) ,
\end{align}
and \eqref{eq:energenhanced} follows.

\end{proof}

The proof of Theorems \ref{thm:damplin}-\ref{thm:enhancelin} now follows from reverting back to the original variables (see \cite{BCZD20} for details). 
The decay/growth rates are encoded in the weights $p^{\pm\frac14}\sim \langle t\rangle^{\pm\frac12}$
of the symmetric variables \eqref{eq:Z1Z2couette}. Theorem \ref{thm:damplin} was proved in \cite{BCZD20} in the more general case when the background flow is 
a shear flow close to the Couette flow. Theorem \ref{thm:enhancelin} is essentially contained in \cite{ZillingerBouss}, without any restriction on $\beta$.
The proof here is different and captures 
both inviscid damping and the sharp algebraic growth of the vorticity, at the cost of requiring the spectral stability condition at the inviscid level.

\begin{remark}
The symmetrization scheme above is very robust: it is applicable in various contexts, such as compressible fluids \cite{antonelli2021linear}, magneto-hydrodynamics \cite{Dolce23}, and three-dimension Boussinesq \cite{CZDZ23}. As we shall see later, capturing the optimal energy structure leads to quantitative improvements on the related nonlinear transition thresholds problems \cites{Dolce23, ZZ23}. We believe that, in fact, this procedure works in other coupled systems arising from geophysical/astrophysical models involving the Couette flow.
\end{remark}

\begin{remark}
    There is an intuitive explanation for the definition of the symmetric variables and their associated energy functional \eqref{eq:energqueq}. The weights are determined in the following way:
    \begin{enumerate} [label=(\alph*), ref=(\alph*)]
\item\label{pointa} impose that the anti-symmetric terms are equal in modulus but with opposite signs after weighting;
\item\label{pointb} require that the weight for one variable is the inverse of the weight of the other variable (up to constants).
\end{enumerate}
     Point \ref{pointa}  ensures the cancellation of antidiagonal terms in energy estimates. Point \ref{pointb} 
     allows us to eliminate non-integrable error terms resulting from the non-commutation between weights and time derivatives. This is achieved by introducing mixed terms in the energy functional.
       
    To derive the energy functional $E$, one can initially  assume that all the factors multiplying the variables are time-independent. Then we can choose the mixed term so that the energy is coercive and constant, namely showing that the constant coefficient dynamics lies on an ellipse in phase-space. 
\end{remark}

\begin{remark}
As the limit $\beta\to\infty$ is approached, buoyancy forces dominate and the influence of the background shear is negligible. Consequently, the constant $C_\beta$ converges to 1, indicating the conservation of symmetric variables. This limit effectively restores the model described by \eqref{eq:bouss-vel-rest}, representing the case of zero background flow \eqref{eq:bouss-vel-rest}.
\end{remark}

\subsection{The zero-diffusivity case}
\label{sec:zerodiff}
The condition \eqref{eq:kappanu} on the comparability of the viscosity and the diffusivity is crucial for the proof of Theorem \ref{thm:enhancelin}. 
However, a physically relevant case is when $\kappa=0$ and $\nu>0$, where $\theta$ really plays the role of the density of an incompressible, stratified, viscous 
fluid. The problem has been address in \cite{masmoudi20bouss}, where a careful analysis of the full nonlinear system has been carried out for Gevrey-regular initial data.
At the linearized level, the result of \cite{masmoudi20bouss} can be phrased as follows.

\begin{theorem}[Stability in the non-diffusive case, \cite{masmoudi20bouss}]\label{thm:nondiffmasmoudi}
Let $\beta,\nu>0$ and $\kappa=0$ in \eqref{eq:linBoussOm}-\eqref{eq:linBoussTh}.
Then there hold the asymptotic stability estimates
	\begin{equation}
	\norm{\omega_{\neq}(t)}_{L^2}+\l t\r\norm{u_{\neq}^x(t)}_{L^2}+\l t\r^{2}\norm{u^y(t)}_{L^2}\lesssim  \l t\r^{-2}\left[\norm{\omega_{\neq}^{in}}_{H^4}+\norm{\theta_{\neq}^{in}}_{H^5}\right],
\end{equation}
and 
\begin{equation}
\norm{\theta_{\neq}(t)}_{L^2}\lesssim  \norm{\omega_{\neq}^{in}}_{H^2}+\norm{\theta_{\neq}^{in}}_{H^1},
\end{equation}
for every $t\geq 0$.
\end{theorem}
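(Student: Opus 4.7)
The plan is to work in the moving-frame Fourier picture \eqref{eq:BoussinesqMoveFlin} and exploit the asymmetry that, unlike in Theorem \ref{thm:enhancelin}, only the vorticity enjoys direct dissipation. When $\kappa=0$ the symmetric variables $(Z,Q)$ of \eqref{eq:Z1Z2couette} satisfy \eqref{eq:Qlin} without the $-\kappa p Q$ term, and the energy identity \eqref{eq:energqueq} contains the unbalanced contribution $\tfrac{\nu}{4\beta}\tfrac{\partial_t p}{|k|p^{1/2}}p\,\Re(Z\bar Q)$ with no $|Q|^2$ absorber, so the clean argument of Proposition \ref{prop:ellisse} breaks down. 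I would accept this and treat $(\hOmega,\hTheta)$ asymmetrically: $\hTheta$ remains bounded in $L^2$ with a loss of derivatives and no decay, while $\hOmega$ is forced to decay by a quasi-stationary balance between the forcing from $\hTheta$ and the viscous absorption $\nu p$.

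Concretely, I would rewrite the vorticity equation in \eqref{eq:BoussinesqMoveFlin} as the scalar linear ODE
\begin{equation*}
\partial_t\hOmega+\nu p\hOmega=-i\beta^2 k\hTheta,
\end{equation*}
and solve by Duhamel with integrating factor $\exp(\nu\Phi(t))$, $\Phi(t):=\int_0^t p(\tau)\,d\tau$. The homogeneous piece decays faster than any polynomial because $\Phi(t)\gtrsim k^2 t^3$ away from the critical time $t_c=\eta/k$, as in standard enhanced dissipation. For the forcing integral, once $t$ is past $t_c$ the symbol $p(s)$ is monotone increasing, and integration by parts against the identity $\e^{-\nu(\Phi(t)-\Phi(s))}=(\nu p(s))^{-1}\partial_s\e^{-\nu(\Phi(t)-\Phi(s))}$, treating $\hTheta$ as slow (since $\partial_s\hTheta=-ik\hOmega/p$), produces the pointwise quasi-stationary estimate
\begin{equation*}
|\hOmega(t,k,\eta)|\lesssim \frac{|k|\beta^2}{\nu p(t,k,\eta)}|\hTheta(t,k,\eta)|+\text{remainders decaying faster in }t.
\end{equation*}
Using $p(t)\gtrsim k^2\langle t-t_c\rangle^2$, summing in $(k,\eta)$ and absorbing the $1/(\nu k^2)$ factor by a finite number of Sobolev derivatives on the initial data yields the decay $\norm{\omega_{\neq}(t)}_{L^2}\lesssim\langle t\rangle^{-2}[\norm{\omega_{\neq}^{in}}_{H^4}+\norm{\theta_{\neq}^{in}}_{H^5}]$. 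The density bound then closes the bootstrap: integrating $\partial_t\hTheta=-(ik/p)\hOmega$ in time and inserting the vorticity decay, the integral $\int_0^\infty (|k|/p(s))\langle s\rangle^{-2}\,ds$ converges uniformly in frequencies at the cost of one further derivative, which accounts for the $H^1/H^2$ loss on $\theta^{in}$ in the density estimate. The velocity bounds follow from the Biot--Savart relations $\hat u^x=-i(\eta-kt)\hOmega/p$, $\hat u^y=ik\hOmega/p$ combined with the $\langle t\rangle^{-2}$ decay of $\hOmega$ and $p\gtrsim k^2\langle t-t_c\rangle^2$.

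The main obstacle is the critical window $|t-t_c|\lesssim 1/|k|$, where $p\sim k^2$ and the quasi-stationary integration-by-parts argument loses its $1/(\nu p)$ gain. On this short interval the dynamics is genuinely oscillatory and energy transfer between $Z$ and $Q$ is order one; I would revert there to the symmetric-variable estimate and apply the inviscid version of Proposition \ref{prop:ellisse}, which controls $|Z|^2+|Q|^2$ across the window by a constant depending only on $\beta$ thanks to the Miles--Howard condition $\beta>1/2$. Patching the outer Duhamel/quasi-stationary regime with this inner symmetric-variable estimate, together with a short-time energy estimate to handle $t\in[0,1]$, covers the full range $t\geq 0$ and completes the proof.
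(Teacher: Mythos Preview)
Your quasi-stationary intuition is exactly right and matches the paper's mechanism: for $\kappa=0$ the vorticity is slaved to the density through the balance $\nu p\,\hOmega\approx -i\beta^2 k\,\hTheta$, which is where the $p^{-1}\sim\langle t\rangle^{-2}$ decay of $\hOmega$ comes from. However, the way you propose to extract this balance --- Duhamel plus integration by parts away from $t_c$, then patching across the critical window with the inviscid Proposition~\ref{prop:ellisse} --- has a genuine gap. The theorem is stated for \emph{all} $\beta>0$ (and the paper stresses that ``no condition on $\beta$ is required''), whereas your inner estimate explicitly invokes the Miles--Howard condition $\beta>1/2$. Even restricting to $\beta>1/2$, Proposition~\ref{prop:ellisse} as stated does not apply to the $(\nu>0,\kappa=0)$ system: the energy identity \eqref{eq:energqueq} then contains the unbalanced cross term $-\tfrac{\nu}{4\beta}\tfrac{\de_tp}{|k|p^{1/2}}p\,\Re(Z\bar Q)$ with no $-\kappa p|Q|^2$ to absorb its $|Q|^2$ part; over the window $|t-t_c|\lesssim|k|^{-1}$ one has $\int p\,dt\sim|k|$, so this produces growth of order $\e^{c\nu|k|}$, which is incompatible with the claimed finite Sobolev loss.

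The paper bypasses the patching entirely by encoding the quasi-stationary balance in a \emph{good unknown}: set $\Sigma:=-\beta^2 ik\hTheta-\nu p\,\hOmega$ (precisely the right-hand side of $\de_t\hOmega$), which satisfies $\de_t\Sigma+\nu p\,\Sigma=-\tfrac{\beta^2 k^2}{p}\hOmega+\nu\de_tp\,\hOmega$. A direct energy estimate on $|\Sigma|^2+|\hTheta|^2$ (expressing $\hOmega$ back through $\Sigma$ and $\hTheta$) gives $|\Sigma(t)|^2+|\hTheta(t)|^2\lesssim_{\nu,\beta}|\Sigma(0)|^2+|\hTheta(0)|^2$ for \emph{every} $\beta>0$ and all $t\geq 0$, with no critical-window dichotomy. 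The pointwise decay $|\hOmega(t)|\lesssim p(t)^{-1}\big(|\Sigma(0)|+|k\hTheta(0)|\big)$ is then read off algebraically from the definition of $\Sigma$, and the velocity and density bounds follow exactly as you describe via Biot--Savart and integration of $\de_t\hTheta=-ik\hOmega/p$. So your boundary term from integration by parts is morally $\Sigma/(\nu p)$; promoting $\Sigma$ to an unknown and bounding it by energy, rather than treating it as a remainder to be iterated away, is what makes the argument uniform across $t_c$ and removes the spurious constraint on $\beta$.
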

The  crucial differences with the previous cases is that $\theta$ does not decay and, despite dissipation, $\omega$ only decays algebraically. It is worth noticing that the
above theorem does not claim any enhanced dissipation, as the constants hidden by $\lesssim$ depend (in a quite bad way, in fact) on $\nu$. Interestingly, no condition on $\beta$
is required.

In the usual moving frame \eqref{eq:changelin}, the linearized system \eqref{eq:BoussinesqMoveFlin} reads
\begin{align}\label{eq:BoussinesqMoveFlinZerodiff}
		\de_t\hOmega =-i\beta^2k \widehat{\Theta} -\nu p \hOmega,\qquad \de_t\hTheta= -\frac{ik}{p}\hOmega .
\end{align}
The analysis of such system in nontrivial, as in the equation for $\hOmega$ there is a competition between the buoyancy force and the viscosity. 
This aspect is captured by the \emph{good unknown} 
\begin{equation}\label{eq:sigmadef}
\Sigma:= -\beta^2 ik \hTheta -\nu p\hOmega,
\end{equation}
which satisfies the equation
\begin{equation}
\de_t \Sigma + \nu p \Sigma= -\frac{\beta^2k^2}{p}\hOmega+\nu\de_tp\hOmega.
\end{equation}
Writing the system for $(\Sigma, \hTheta)$, and performing an energy estimate on $|\Sigma|^2+|\hTheta|^2$, we find that
\begin{equation}
|\Sigma(t)|^2+|\hTheta(t)|^2\lesssim_{\nu,\beta} |\Sigma(0)|^2+|\hTheta(0)|^2,
\end{equation}
and therefore \eqref{eq:sigmadef} entails the algebraic decay estimate
\begin{equation}
|\hOmega (t)|\lesssim  \frac{1}{ k^2+(\eta-k t)^2}\left[|\Sigma(0)|+|k\hTheta(0)|\right].
\end{equation}
From this pointwise estimate in frequency space, it is not hard to obtain Theorem \ref{thm:nondiffmasmoudi}.

\section{Inviscid nonlinear stability}\label{sec:nonlinear}
Now we consider the full nonlinear inviscid system, i.e. \eqref{eq:omNLintro}-\eqref{eq:thNLintro} with $\nu=\kappa=0$. 
 From the linearized problem studied in the previous section, we know that the behavior of $\bu_0$ and $\bu_{\neq}$ is fundamentally different: for the $x$-average we do not have any inviscid decay mechanism acting, whereas for $\bu_{\neq}$ we have the inviscid damping estimates in Theorem \ref{thm:damplin}. However, thanks to $\nabla \cdot \bu=0$, we know that $\bu_0=(u_0(t,y),0)$, meaning that this part of the velocity field is a \textit{time-dependent shear flow}. It is therefore natural to rewrite the equations \eqref{eq:omNLintro}-\eqref{eq:thNLintro}  as 
\begin{align}
\de_t\omega+(y+u_0(t,y))\de_x\omega &=-\beta^2\de_x\theta-\bu_{\neq}\cdot \nabla \omega,\\
\de_t\theta+(y+u_0(t,y))\de_x\theta &=\de_x\psi-\bu_{\neq}\cdot \nabla \theta.
\end{align}
For an initial perturbation of size $O(\eps)$, we might hope to treat $u_0$ perturbatively at most on a time-scale $O(\eps^{-1})$, which is the natural time-scale on which the linearized dynamics can be thought to be a good approximation. For longer times, we need to handle the zero $x$-mode differently: we remove it through a \textit{nonlinear} change of coordinates by defining
\begin{equation}
\label{eq:nonlinearchange}
v=y+\frac{1}{t}\int_0^tu_0(\tau,y)\dd \tau, \qquad z=x-vt
\end{equation}
This was indeed one of the key ideas introduced in \cite{BM15} for the 2D Euler equations.  This approach can be viewed as a hybrid perspective combining the Lagrangian and the Eulerian frameworks. The change of coordinates follows on the characteristics determined by the leading order component of the velocity (at long times), provided that inviscid damping is indeed sustained.

On the other hand, due to the instabilities of the linearized problem, we expect that $\omega\approx O(\sqrt{t}\eps)$. Therefore, the best time-scale on which one can hope to handle $u_{\neq}\cdot \nabla \omega$ perturbatively is $t\ll \eps^{-2}$. This is precisely quantified in the main theorem in \cite{BBCZD21}, obtained in collaboration with Bedrossian. We present below a simplified version of it, in order to highlight the analogies with Theorem \ref{thm:damplin}. We need the following notation for the statement: for any function $f:\TT \times \RR\to \RR$ and $0<s\leq 1$, we define the Gevrey--$1/s$ norm as 
\begin{equation}
\norm{f}_{\cG^{\lambda,s}}^2:=\sum_{k\in \ZZ}\int_{\RR}\e^{2\lambda(|k|+|\eta|)^s}|\hat{f}(k,\eta)|^2\dd \eta.
\end{equation}
\begin{theorem}[Nonlinear inviscid damping and instability, \cite{BBCZD21}]
\label{thm:main}
Let $\beta>1/2$. For all $s\in (1/2,1]$ and $\lambda_0>0$, there exists $\delta=\delta(\beta,s,\lambda_0)\in (0,1)$ and $\eps_0=\eps_0(\beta,s,\lambda_0)\in (0,\delta)$ such that the following holds true: let $\eps\leq \eps_0$ and assume that 
\begin{equation}
\label{eq:inGevrey}
\norm{\bu^{in}}_{L^2}+\norm{\omega^{in}}_{\cG^{\lambda_0,s}}+\norm{\theta^{in}}_{\cG^{\lambda_0,s}}\leq \eps.
\end{equation}
Then, for all $0\leq t\leq \delta \eps^{-2}$, there hold the nonlinear inviscid damping estimates 
\begin{equation}
    \norm{u^x_{\neq}(t)}_{L^2}+\norm{\theta_{\neq}(t)}_{L^2}+\jap{t}\norm{u^y(t)}_{L^2}\lesssim \frac{\eps}{\jap{t}^\frac12}.
\end{equation}
Moreover, there exists $K=K(\beta,\lambda_0,s)$ such that if 
\begin{equation}
\norm{\omega_{\neq}^{in}}_{H^{-1}}+\norm{\theta_{\neq}^{in}}_{L^2}\geq K\eps \delta,
\end{equation}
the shear-buoyancy instability estimate
\begin{equation}
    \norm{\omega_{\neq}(t)}_{L^2}+\norm{\nabla \theta_{
    \neq}}_{L^2}\approx \jap{t}^{\frac12}\eps 
\end{equation}
holds true for all $0\leq t\leq \delta \eps^{-2}$.
\end{theorem}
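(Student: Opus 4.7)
My approach is to combine the linear symmetrization scheme of Proposition~\ref{prop:ellisse} with the nonlinear change of coordinates \eqref{eq:nonlinearchange} and a Gevrey energy method with carefully tuned Fourier multipliers, in the spirit of the Bedrossian--Masmoudi program for 2D Euler, adapted to the coupled Boussinesq system. I would first pass to the coordinates $(z,v)$, in which the perturbation unknowns $\sOmega,\sTheta,\sPsi$ satisfy a system formally analogous to \eqref{eq:EulerBmove} but with $\Delta_L$ replaced by a deformed Laplacian $\Delta_L^\star$ with symbol $p^\star(t,k,\eta)$ depending on the coordinate change (hence ultimately on $u_0$), with the Couette transport removed on the left-hand side and the genuinely nonlinear transport by $\bu_{\neq}^\star$ appearing on the right-hand side. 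I would then define the symmetric variables $\sZ := (p^\star/k^2)^{-\frac14}\widehat{\sOmega}$ and $\sQ := (p^\star/k^2)^{\frac14}ik\beta\widehat{\sTheta}$, exactly as in \eqref{eq:Z1Z2couette}, so that their linear part is diagonalized and the coercive functional~\eqref{def:pointwise-functional-Couette} is available.

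The bootstrap would control the Gevrey-$1/s$ weighted energy
\begin{equation}
\cE(t)=\sum_{k\in\ZZ}\int_{\RR} A(t,k,\eta)^2\bigl(|\sZ|^2+|\sQ|^2\bigr)\dd\eta,
\end{equation}
with a multiplier $A(t,k,\eta)=\e^{\lambda(t)(|k|+|\eta|)^s}M(t,k,\eta)$, where $\lambda(t)\searrow\lambda_\infty>0$ absorbs Gevrey losses from nonlinear interactions and $M$ is a commutator weight engineered to pay for the resonant contributions near the Orr critical times $t\approx\eta/k$. The bootstrap hypothesis $\cE(t)\leq 4C_\beta^2\eps^2$ on $[0,T^\star]$, coupled with an auxiliary Gevrey bound on $u_0$ and on the coordinate change $v$, is the correct structure. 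The crucial feature is that the factor $(p^\star)^{-1/4}\sim\jap{t}^{-1/2}$ built into $\sZ$ already absorbs the linear $\sqrt{t}$ growth of vorticity, so $\sZ$ itself stays of size $O(\eps)$, and the inviscid damping rates \eqref{eq:main-lin1}--\eqref{eq:main-lin2} re-emerge upon reverting to the original unknowns. For the matching two-sided shear-buoyancy estimate, the lower coercivity in~\eqref{eq:energbelowabove} lifts to the nonlinear setting up to an $O(\eps)$ perturbation, and reverting $\sZ\mapsto\widehat{\sOmega}$ picks up $(p^\star)^{1/4}\sim\jap{t}^{1/2}$; the hypothesis $\norm{\omega_{\neq}^{in}}_{H^{-1}}+\norm{\theta_{\neq}^{in}}_{L^2}\geq K\eps\delta$ forces the linear growth to dominate the $O(\eps^3 t^2)$ nonlinear error throughout $[0,\delta\eps^{-2}]$.

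The nonlinear terms $\bu_{\neq}^\star\cdot\nabla\sOmega$ and $\bu_{\neq}^\star\cdot\nabla\sTheta$, expressed in symmetric variables, decompose via paraproduct into transport, reaction, and remainder pieces. The decisive count is that the worst reaction term, where the high-frequency factor is the stream function $\widehat{\sPsi}$ and the low-frequency factor is $\sZ$ or $\sQ$, contributes heuristically $\ddt\cE\lesssim \eps^2\jap{t}$, which integrates to $\eps^2 t^2\lesssim \delta^2$ on $[0,\delta\eps^{-2}]$ and is therefore controllable for $\delta$ sufficiently small. The transport pieces are handled by the standard commutator trick, producing integrable errors thanks to the Gevrey regularity. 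In parallel, a separate energy estimate on the zero mode $u_0(t,y)$ must be closed, since it enters $p^\star$ and hence the definition of the symmetric variables themselves.

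The main obstacle will be the design of the commutator multiplier $M$ in the presence of the stratification coupling. Unlike for 2D Euler, the $\sZ$-$\sQ$ coupling injects a $\sqrt{t}$ loss at every nonlinear interaction, cascading through the wave-type oscillation between vorticity and density, so $M$ must simultaneously dominate the transport/reaction resonances \emph{and} avoid spoiling the delicate cancellation in the mixed term of \eqref{def:pointwise-functional-Couette}. The Gevrey threshold $s>1/2$ is precisely what one obtains by counting powers of the echo chains in the presence of this extra instability: below $s=1/2$ the losses cease to be summable and no fixed Gevrey radius can be propagated on the $\eps^{-2}$ time-scale.
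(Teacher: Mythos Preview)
Your architecture is right in outline---nonlinear coordinates, symmetric variables, Gevrey bootstrap, paraproduct splitting into transport and reaction---but there are two concrete gaps that would prevent the bootstrap from closing as you describe.

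First, the single energy $\cE$ on $(\sZ,\sQ)$ does not close by itself. The paper's $E_L$ is exactly your functional (with the mixed term from \eqref{def:pointwise-functional-Couette} included), and it is stated explicitly that ``due to errors related to $\theta_0,u_0$, the control of this functional is not sufficient to close the bootstrap''. The obstruction is regularity: since $\sZ=(p^\star/k^2)^{-1/4}\widehat{\sOmega}$ sits half a derivative \emph{below} $\Omega$, a Gevrey bound on $\sZ$ does not give enough control on $\Omega$ to close the estimates on the zero modes, which in turn feed back into $p^\star$ and hence into the very definition of $\sZ,\sQ$. A second, ``natural'' energy $E_n=\tfrac12(\|A\Omega\|^2+\beta^2\|A\nabla_L\Theta\|^2)$ at the \emph{highest} regularity must be propagated with the growing bound $E_n(t)\lesssim\eps^2 t$; its time derivative produces a linear error whose control is equivalent to the bound on $AQ$, so $E_L$ and $E_n$ are genuinely coupled and neither closes alone. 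Your ``auxiliary Gevrey bound on $u_0$'' is not a side estimate but forces this entire second hierarchy.

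Second, your reaction-term count ``$\ddt\cE\lesssim\eps^2\jap{t}$, which integrates to $\eps^2 t^2\lesssim\delta^2$'' is arithmetically wrong: at $t=\delta\eps^{-2}$ one gets $\eps^2 t^2=\delta^2\eps^{-2}$, which cannot be absorbed into $\cE\lesssim\eps^2$ for any $\delta$ independent of $\eps$. The time-scale $\eps^{-2}$ does not arise from a power count of this kind. It comes from the toy model \eqref{eq:toy1}--\eqref{eq:toy2}: the coefficient driving the resonant cascade is $\eps\sqrt{1+t}$, and only when this is $O(1)$ does each resonant interval $|t-\eta/k|\leq\eta/k^2$ contribute a bounded algebraic factor $(\eta/k^2)^c$, summing over $k\leq\sqrt{\eta}$ to the Gevrey-$2$ loss $\e^{c_0\sqrt{\eta}}$. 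The weight is therefore not a generic commutator multiplier $M$ but the product $m^{-1}w^{-1}$, where $w$ is built interval-by-interval to match the toy-model growth (with an imbalance between resonant and non-resonant intervals) and $m$, with $\de_t m/m\approx(1+(t-\eta/k)^2)^{-1}$, absorbs the linear coupling errors that can no longer be treated frequency-by-frequency in the nonlinear problem. Without the toy-model-driven design of $w$, the reaction terms are not controllable on $[0,\delta\eps^{-2}]$.
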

In the above result, it is evident that the linearized behavior encoded in Theorem \ref{thm:damplin} persists (qualitatively) throughout the entire time interval $[0,\delta\eps^{-2}]$.
 The $\eps^{-2}$ time-scale is significantly longer compared to the $\eps^{-1}$ range within which linear properties are typically expected to propagate.
However, two crucial distinctions are noteworthy:
    \begin{enumerate} [label=(\roman*), ref=(\roman*)]
    \item\label{item:reg} The initial datum is  extremely smooth, as precisely quantified by \eqref{eq:inGevrey};
    \item\label{item:nonlinearframe} In the nonlinear setting, we control the variables in the nonlinear moving frame \eqref{eq:nonlinearchange}.
\end{enumerate}
For what concerns \ref{item:reg}, this smoothness is linked to \textit{derivative losses} within the system. These losses are quite severe and connected to an \textit{inverse cascade mechanism}. The use of the Gevrey class is now a standard approach in such problems, as seen in \cite{BM15}, and in the 2D Euler case, it has been demonstrated that $s=1/2$ is the optimal choice to prove asymptotic stability of the vorticity (see \cite{deng2018long}). Further details on the mechanism behind derivative losses are provided in Section \ref{sec:toy}, where the expected worst case scenario is explained. Once this is understood, in Section \ref{sec:energies} we discuss some properties of the weights needed to to carry out the energy estimates, upon which the proof of Theorem \ref{thm:main} is based.

We do not discuss the major technical difficulties related to \ref{item:nonlinearframe}, primarily concerning energy estimates at the highest level of required regularity. These challenges go beyond the scope of this introductory note to the problem. Consequently, we will ignore the $x$-averages in the sequel. Indeed, our strategy and proof ideas are rooted in the intuition developed from the linearized problem and toy models excluding $u_0$.

\begin{remark}[On the physical significance of the zero mode]
The presence of $u_0$ not only presents a technical challenge but also entails crucial physical aspects that must not be overlooked.
In the 2D Euler case, having that $\bu_{\neq}\to 0$ strongly in $L^2$  implies that the kinetic energy of the full solution\footnote{In $\TT\times \RR$, the Couette flow has infinite energy, but this line of reasoning easily adapt to the finite channel where similar results are true at least for perturbations supported away from the boundary.}, a conserved quantity, must be concentrated all in the shear flow $y+u_0^{\infty}(y)$, where $u_0^{\infty}$ is the infinite-time limit of the zero-th mode. This phenomenon is a consequence of the inverse cascade in 2D fluids, where, in order to preserve kinetic energy, information is transferred from small spatial scales to larger ones, in this case represented by the zero mode. 

In the Euler-Boussinesq equations \eqref{eq:mom-euler}-\eqref{eq:den-euler}, the conserved quantity is the sum of kinetic and potential energy $$\frac12\int|\tilde{\boldsymbol{u}}|^2\dd x\dd y +\frac{\mathfrak{g}}{\bar{\rho}_{\text c}}\int  \theta y\; \dd x\dd y.$$ 
Since both density and velocity undergo inviscid damping, and the above quantity is conserved, they cannot be concentrated all at high $x$-frequencies. 
Therefore, in the Boussinesq case, an additional challenge arises: the role of $\theta_0$ becomes fundamental in the dynamics. This aspect introduces further technical difficulties, which we briefly outline in Section \ref{sec:energies}. 
\end{remark}
 \subsection{A toy model for the derivative loss mechanism}
 \label{sec:toy}
Consider the nonlinear term associated to $\bu_{\neq}\cdot \nabla \omega$.  Assuming that $u_0=0$, following the notation introduced in Section \ref{sec:symm}, we first notice a crucial cancellation in the nonlinear term: for any $G$ we have 
\begin{equation}
\bU_{\neq}\cdot \nabla_LG=\nabla_L^\perp \Delta_L^{-1}\Omega_{\neq}\cdot \nabla_LG=\nabla^\perp \Delta_L^{-1}\Omega_{\neq}\cdot \nabla G.
\end{equation}
Namely, some factors of time cancel out and at first glance, one might hope that this term is integrable over time, considering that $\Delta_L^{-1}$ scales as $t^{-2}$. This is of course hopeless due to the loss of regularity associated with achieving such decay. Consequently, we must gain a more precise understanding of the structure of the nonlinear term.

 Inspired by the linearized problem, the good quantities to control are the symmetric variables $Z,Q$ (rigorously defined now with the change of coordinates \eqref{eq:nonlinearchange}). When we retain only the nonlinear term in the equation for $Z$, a first approximation is
\begin{equation}
\de_t Z= (k^2/p)^\frac14\mathcal{F}(\nabla^\perp \Delta_L^{-1}\Omega_{\neq}\cdot \nabla \Omega).
\end{equation}
In the Fourier space, we know that 
\begin{equation}
\label{eq:app1}
|\mathcal{F}(\nabla^\perp \Delta_L^{-1}\Omega_{\neq})|\leq \frac{|\eta|+1}{k^2}\frac{1}{1+(\frac{\eta}{k}-t)^2}|\hat{\Omega}(k,\eta)|=\frac{|\eta|+1}{k^2}\frac{1}{(1+(\frac{\eta}{k}-t)^2)^\frac34}|Z(k,\eta)|
\end{equation}
When $|\eta|\gg k^2$ and $t$ is close to $\eta/k$ (the Orr's critical time \cite{BM15}), we see that this term can be extremely large. Being the equation nonlinear, the term in \eqref{eq:app1} might as well excite neighbouring frequencies leading to potentially enormous growth in the system due to an inverse cascade mechanism. Indeed, the critical time for the mode $k$ is $t_k=\eta/k$, which is before the one for the mode $k-1$, that is $t_{k-1}=\eta/(k-1)$.  To mimic this possible inverse cascade, the worst possible case is when $\nabla^\perp \Delta_L^{-1}\Omega_{\neq}$ is at very high-frequencies whereas $\nabla \Omega$ is at low frequencies. In the opposite regime, we can hope to pay regularity on the velocity field in order to get back integrability in time. When the two terms are at comparable frequencies, it is enough to allow some continuous, but finite, loss of derivatives in time.  Therefore, we will consider only the term $\de_v\Delta^{-1}_L\Omega_{\neq}\de_z\Omega$ (associated with the $|\eta|/k^2$ growth). The next (brutal) approximation is to assume that $\de_z\Omega$ only lives at frequencies $k=\pm1,\eta=0$ so that we can write explicitly  
\begin{align}
\de_tZ(k,\eta)&= \frac{|k|^\frac12}{p(k,\eta)^\frac14}\sum_{\ell=k\mp 1} \frac{\pm\eta}{\ell^2+(\eta-\ell t)^2}\hat{\Omega}(\ell,\eta) \hat{\Omega}(\pm 1,0) \\
&=\frac{1}{(1+(t-\eta/k)^2)^\frac14}\sum_{\ell=k\mp 1} \frac{\pm\eta}{\ell^2(1+(t-\eta/\ell)^2)^\frac34}(1+t^2)^\frac14Z(\ell,\eta) Z(\pm1,0),
\end{align}
where in the last line we used the definition of $Z$ in \eqref{eq:Z1Z2couette}. We are thus left with a system of ODEs that is approximating the interactions of the frequency $k$ with the neighbouring ones $k\pm 1$. We restrict our attention to the time interval $|t-\eta/k|\leq |\eta|/k^2$, which is when the term in \eqref{eq:app1} is dangerous. To get an easy ODE to study, we consider only the interactions between $k$ and $k-1$ (to mimic the inverse cascade) and we assume that $Z(\pm 1,0)=\eps$. Since $|t-\eta/k|\leq |\eta|/k^2$, we know that $|t-\eta/(k-1)|\gtrsim |\eta|/k^2$. Hence, ignoring signs and assuming $\eta>0$ we arrive at the coupled system of ODEs
\begin{align}
\label{eq:toy1}
\de_t Z_k&=\left(\frac{k^2}{\eta}\right)^\frac12 \frac{\eps \sqrt{1+t}}{(1+(t-\eta/k)^2)^\frac14} Z_{k-1},\\
\label{eq:toy2}\de_t Z_{k-1}&=\left(\frac{\eta}{k^2}\right)^\frac12 \frac{\eps \sqrt{1+t}}{(1+(t-\eta/k)^2)^\frac34} Z_{k},
\end{align}
that has to be considered for $|t-\eta/k|\leq \eta/k^2$.
\begin{remark}
The approximations done to obtain this toy model are analogous to the ones in the 2D Euler case \cite{BM15}. However, here the relations between the $k$ and $k-1$ modes are different:  in the equation for $Z_k$ we have a time-dependent factor in front of $Z_{k-1}$, whereas in the 2D Euler case only some power of $k^2/\eta$ is present. This is related to the fact that the symmetric variable $Z$ requires less regularity than $\Omega$. Moreover, notice the term $\eps \sqrt{1+t}$ at the numerator. This is a first clear indication of why we need to work on a time-scale $t=O(\eps^{-2})$, because otherwise we  have some exponential growth (hence loss of regularity since $t\approx \eta/k$) that will break down any nonlinear perturbative argument. On the other hand, for $t\lesssim \eps^{-2}$, thanks to some standard energy estimate we are able to prove that if $|Z_k|=|Z_{k-1}|=1$ then 
\begin{equation}
|Z_k(t)|+|Z_{k-1}(t)|\lesssim (\eta/k^2)^c, \qquad |t-\eta/k|\leq \eta/k^2, \qquad t\lesssim \eps^{-2},
\end{equation}
for some constant $1<c<4$. That is, we have at most an algebraic loss of derivatives in the time-interval $|t-\eta/k|\leq \eta/k^2$.
\end{remark}
Starting at time $t=\eta/k$ and investigating what happens at $\eta/(k-1)$ and so on, the behavior resembles that of the picture below 
\medskip

\begin{center}
\begin{tikzpicture}
\draw[thick] (0,0) -- (9,0);
\draw[thick,dashed] (9,0) -- (12,0); 
\draw[thick,->] (12,0) -- (15,0)node[anchor=north east] {$t$};
%\draw[thick,->] (0,0) -- (14,0)node[anchor=north east] {$t$};
\draw[thick] (1,-.1)--(1,.1)  node[anchor=south] {$Z_k=1$} ;
\node[anchor=north] at (1,-.2) {$\frac{\eta}{k}$};

\draw[thick] (4,-.1)--(4,.1)  node[anchor=south] {$ Z_{k-1}=\left(\frac{\eta}{k^2}\right)^c$} ;
\node[anchor=north] at (4,-.2) {$ \frac{\eta}{k-1}$};

\draw[thick] (8,-.1)--(8,.1)  node[anchor=south] {$Z_{k-2}=\left(\frac{\eta}{k^2}\frac{\eta}{(k-1)^2}\right)^c$} ;
\node[anchor=north] at (8,-.2) {$\frac{\eta}{k-2}$};

\draw[thick] (13,-.1)--(13,.1)  node[anchor=south] {$Z_{1}=\left(\frac{\eta^k}{k!^2}\right)^c$} ;
\node[anchor=north] at (13,-.2) {$\eta$};
\end{tikzpicture}
\end{center}
This cascade is only relevant for $|k|\leq \sqrt{\eta}$, hence, thanks to Stirling's approximation we obtain that the worst possible growth on $Z_1$ is of order 
\begin{equation}
\label{bd:loss}
Z_1(\eta) \approx \left(\frac{\eta^k}{k!^2}\right)^c\lesssim \frac{1}{\sqrt{\eta}}\e^{-c_0\sqrt{\eta}},
\end{equation}
suggesting the Gevrey-2 regularity loss.

\subsection{The weighted energy functional}
Having a good guess for the worst case scenario in terms of growth/loss of regularity, the idea is to first construct a Fourier multiplier $w(t,k,\eta)$ mimicking the behavior of the toy model \eqref{eq:toy1}-\eqref{eq:toy2}. Its definition is rather technical, but let us explain the main features needed. First, we want $w$ to be piecewise smooth and increasing in time so that when we study 
\begin{equation}
\label{eq:dtwz}
\de_t (w^{-1} Z)=-\frac{\de_t w}{w} (w^{-1} Z)+\text{other terms},
\end{equation}
we gain some damping term from $-\de_t w/w<0$. This term will help us absorb the nonlinear errors related to the toy model. However, it is important to note that it also \textit{weakens} our norms, leading to a continuous loss of derivatives over time, which is a standard argument in Cauchy-Kovalevskaya type results.

In our case, for fixed frequencies $k,\eta$, we need to distinguish carefully the behavior near \textit{resonant} times $|t-\eta/k|\leq \eta/k^2$ and outside. We expect that on each time interval of size $\eta/\ell^2$ with $\ell=1,\dots \sqrt{\eta}$ the $k$-th mode experiences growths of order some powers of $\eta/k^2$. But we have to study also the difference between the growth on resonant and non-resonant time, a feature that reflects in an \textit{imbalanced} weight both in time and frequencies. Moreover, on account of \eqref{bd:loss}, it must allow for a loss of derivatives of order $\e^{\sqrt{\eta}}$. The full construction can be found in \cite{BBCZD21}*{Section 4}, but it behaves heuristically as in the drawing below
              \begin{center}
             \begin{tikzpicture}[scale=.7]
            	\draw[->] (0,0)--(11.5,0);
            	\draw[->] (0,0)--(0,6);
            	\draw[dashed,red] (0,.25)--(11,.25);
            	\draw[dashed,red] (7.5,.25)--(7.5,1.25);
            	\draw[dashed,red] (6,1.25)--(7.5,1.25);
            	\draw[dashed,red] (6,1.25)--(6,3.25);
            	\draw[dashed,red] (4,3.25)--(6,3.25);
            	\draw[dashed,red] (4,3.25)--(4,4.25);
            	\draw[dashed,red] (2,4.25)--(4,4.25);
            	\draw[dashed,red] (2,4.25)--(2,5.25);
            	\draw[dashed,red] (0,5.25)--(2,5.25);
            	
            	%% PLOT of W
            	\draw[thick, blue] (0,5.25)--(2,5.25);
            	\draw[thick, blue] plot[smooth, tension=1] coordinates {(2,5.25) (2.5,4.60) (4,4.25)};
            	\draw[thick, blue] plot[smooth, tension=1] coordinates {(4,4.25) (4.5,3.60) (6,3.25)};
            	\draw[thick, blue] plot[smooth, tension=1] coordinates {(6,3.25) (6.5,1.9) (7.5,1.25)};
            	\draw[thick, blue] plot[smooth, tension=1] coordinates {(7.5,1.25) (8,.5) (9,.25)};
            	\draw[thick,blue] (9,.25)--(11,.25);
            	
            	%% MARKERS 
            	\draw[thick,black, |-|] (9,1.25)--(9,3.25);
            	\draw (9,2.25) node[anchor=west] {\Large$ \left(\frac{\eta}{k^2}\right)^{c+\textcolor{blue}{\frac12}}$};
            	
            	\draw[thick,green!70!black, |-|] (9,5.25)--(9,4.25);
            	\draw (9,4.75) node[anchor=west] {\Large$ \left(\frac{\eta}{k^2}\right)^{c}$};
            	
            	\draw (2,2) node {\large $w^{-1}(t,k,\eta)$};
            		
            	\draw[thick,red] (-.1,5.25)--(.1,5.25);
            	\draw (0,5.25) node[anchor=east] { $\mathrm{e}^{\sqrt{\eta}}$};
            	
            	\draw[thick,red] (-.1,.25)--(.1,.25);
            	\draw (0,.25) node[anchor=east] { $1$};
          
                \draw[thick,red] (2,-.1)--(2,.1);
                \draw (2,0) node[anchor=north] { $\sqrt{\eta}$}; 
                \draw[thick,red] (4,-.1)--(4,.1);
                \draw (4,0) node[anchor=north] { $\frac{\eta}{k+1}$};
                \draw[thick,red] (6,-.1)--(6,.1);
                \draw (6,0) node[anchor=north] { $\red{\frac{\eta}{k}}$}; 
                \draw[thick,red] (7.5,-.1)--(7.5,.1);
                \draw (7.5,0) node[anchor=north] { $\frac{\eta}{k-1}$};  
                \draw[thick,red] (9,-.1)--(9,.1);
                \draw (9,0) node[anchor=north] { $2\eta $}; 
                
                \draw (11.5,0) node[anchor=north east] {$t$};
                \fill [red, opacity=.5] (5,-.1) rectangle (7,.1);
                
        \end{tikzpicture}
\end{center}
The actual weight $A(t,k,\eta)$ used is composed of three main pieces,
\begin{equation}
A(t,k,\eta) =\e^{\lambda(t)(|k|+|\eta|)^s}(m^{-1} w^{-1})(t,k,\eta) +\text{technical corrections},
\end{equation}
where $w$ is the weight based on the toy model. The exponent $0<\lambda_{\infty}\leq\lambda(t)\leq \lambda_0$ is decreasing continuosly in time, allowing for a loss of an infinite numbers of derivatives but is always comparable to a Gevrey-$1/s$ space with different radius of regularity $\lambda$. The weight $m$ is instead introduced to handle the linear error terms, because in the nonlinear argument we cannot hope to argue at fixed $k,\eta$ frequencies as in Section \ref{sec:linear}. The definition of $m$ already appeared in other related problems and its main property is that $\de_t m/m\approx (1+(t-\eta/k)^2)^{-1}$. 

With the weight $A$ at hand, we need to define the energy functional. The first guess is to directly adapt the one that already works in the linear case, see \eqref{def:pointwise-functional-Couette}, namely
\begin{equation}
E_L(t)=\frac12\left(\|(AZ)(t)\|^2_{L^2}+\|(AQ)(t)\|^2_{L^2}+\frac{1}{2\beta}\jap{\left(\frac{\de_t p}{|k|^\frac12 p}AZ\right)(t),(AQ)(t)}_{L^2}\right).
\end{equation}
The goal is  to bootstrap the estimate $E_L(t)\lesssim \eps^{2}$ for all $t\lesssim \eps^{-2}$, from which the bounds in Theorem \ref{thm:main} would follow by going back to the original variables (for the lower bound an additional Duhamel argument is needed). Unfortunately, due to errors related to $\theta_0, u_0$, the control of this functional is not sufficient to close the bootstrap. Indeed, the variables $Z,Q$ do not have enough regularity to close the required estimates on the zero modes ($Z$
is essentially half derivative of the velocity field). To overcome this problem, the \textit{natural energy} 
\begin{equation}
E_n(t):=\frac12\left(\|(A\Omega)(t)\|^2_{L^2}+\beta^2\|(A\nabla_L\Theta)(t)\|^2_{L^2}\right)
\end{equation}
is introduced. This would in fact be the first energy functional to try if one aims at proving the bounds in Theorem \ref{thm:main}. Notice that this energy functional is at the highest level of regularity and, due to the growth observed in the linearized problem, we expect that $E_n(t)\lesssim \eps^2 t$. With the last bound at hand, the estimates in Theorem \ref{thm:main} would again follow and therefore the introduction of $Z,Q$ seems useless. However, the time-derivative of $E_n$ introduces a linear error whose control is equivalent at having the bounds on $AQ$, see \cite{BBCZD21}*{Remark 2.6}.

There is also another energy functional related to the change of coordinates which we do not discuss here, see \cite{BBCZD21}*{Remark 2.5}. This energy functional bears similarities with the one introduced in \cite{BM15} for the change of coordinates. However, in \cite{BBCZD21} a better control of the $x$-averages is needed.

\subsection{The bootstrap argument}
When taking the time derivative of one of the energy functionals, terms with a good sign appear from the time-derivative of the weight $A$, as in \eqref{eq:dtwz}, sometimes called \textit{Cauchy-Kovalevskaya} terms. However, several nonlinear error terms are generated. Even in the toy model, there is a significant disparity in the behavior of the velocity field and vorticity/density across different frequency regimes. Hence, one has to split the nonlinear terms to account for the different regimes
(essentially a paraproduct decomposition with more frequency regions than usual). 

First, for some terms one can exploit the transport structure; in these cases, commutators appear. For instance, for any incompressible vector field $\boldsymbol{V}$ and function $G$ we have 
\begin{equation}
\jap{A(\boldsymbol{V} \cdot \nabla G),AG}_{L^2}=\jap{[A,\boldsymbol{V}] \cdot \nabla G,AG}_{L^2},
\end{equation}
where $[A,\boldsymbol{V}]$ is the commutator. Writing the integral explicitly, the integrand arising from the commutator is $$(A(k,\eta)-A(\ell,\xi))\hat{\boldsymbol{V}}(k-\ell,\eta-\xi),$$
in Fourier space. This structure is particularly useful when $\boldsymbol{V}$ is regular enough to allow some loss of derivatives. This will correspond to the low frequency (both in $k,\eta$) piece of the velocity field $\bU_{\neq}$, generating errors that are usually called \textit{transport nonlinearities}, see \cite{BBCZD21}*{Section 6.3}. The key point here is to gain derivatives, time-factors or reconstruct pieces of $\de_tA/A$ from the commutator. To explain this last point heuristically, since $A$ contains weights depending on $t-\eta/k$, doing a commutator in $\eta$ at fixed $k$ looks like doing a time derivative of $A$ thanks to the mean value theorem. For instance, assuming that $A(t,k,\eta)=A(t-\eta/k)$, by the mean value theorem we get
\begin{equation}
|A(t,k,\eta)-A(t,k,\xi)|= |(\de_\eta A)(t,k,\tilde{\xi})||\eta-\tilde{\xi}|=\frac{1}{|k|} |(\de_t A)(t,k,\tilde{\xi})||\eta-\tilde{\xi}|.
\end{equation}
A commutator is giving us back a derivative in the horizontal variable at the price of paying $\eta-\tilde{\xi}$ derivatives (associated to the low-frequency velocity field) plus a control on $\de_tA$. But notice that $\de_tA$ is itself related to the commutator $[\de_t,A]$, which now is helping us in absorbing similar errors. Unfortunately, the weights do not behave this nicely when exchanging $k$'s (which are  discrete frequencies as well, so the mean value theorem cannot directly be applied). This is the main reason why a careful understanding of the ``price" to exchange frequencies in the weights is needed, see \cite{BBCZD21}*{Section 4}.

When instead the velocity field $\bU_{\neq}$ is at high frequencies, commutators does not help because the gradient in the transport operator hits a low-frequency term. Here is where the design of the weight $w$ plays a crucial role. Indeed, the toy model in Section \ref{sec:toy} is constructed exactly with a term arising in this regime. These nonlinear errors are usually called \textit{reaction nonlinearities}, whose detailed treatment can be found in \cite{BBCZD21}*{Section 6.4}.

In the end, the detailed computations of the bounds for a specific component in the nonlinear errors are not particularly challenging from a mathematical perspective. These bounds rely on non-sophisticated algebraic and Cauchy-Schwartz inequalities. The primary difficulties lie in correctly deducing the appropriate toy model and in devising the weights to maintain favorable frequency exchange properties. Then, there is a substantial number of error terms to manage, but with the appropriate weights in place, the only other essential element is to possess the patience to rigorously assess each component.
 
\label{sec:energies}

\section{Viscous asymptotic stability}\label{sec:viscous}

Understanding the conditions and mechanisms that lead to the transition from laminar to turbulent flow represents a fundamental challenge within the realm of fluid mechanics. This quest dates back to the groundbreaking experiments conducted by Reynolds in 1883 and has since fueled extensive research efforts in the field of hydrodynamic stability. A key focus of this research has been the exploration of critical flow parameters, notably the Reynolds number, which denote the onset of turbulent behavior.

\subsection{Nonlinear stability thresholds}
In the context of equations \eqref{eq:omNLintro}-\eqref{eq:thNLintro}, where we consider the simplified case of $\nu=\kappa>0$ for clarity, the nonlinear transition threshold problem can be precisely formulated as follows: Given two function spaces denoted as $X$ and $Y$, the objective is to determine the minimal value of $\alpha\geq0$ such that the following condition holds:
\begin{align}\label{eq:stab}
\| \omega^{in},\theta^{in}\|_X\ll \nu^\alpha \qquad \Rightarrow \qquad
\begin{cases}
\| \omega(t),\theta(t)\|_Y\ll 1, \quad \forall t>0,\\
\| \omega(t),\theta(t)\|_Y \to 0,\quad \text{as } t\to\infty.
\end{cases}
\end{align}
Here, the exponent $\alpha$ serves as a crucial indicator, delineating the size of the basin of attraction for the Couette flow. A first estimate can be traced back to \cite{Zillinger21}, with subsequent refinements in \cite{ZZ23}, as stated below.

\begin{theorem}[Nonlinear stability threshold, \cite{ZZ23}]
\label{th:zhao}
Let $s\geq 6$ and $\beta^2>1/4$. There exist $\eps_0,\nu_0\in(0,1)$ such that for all $\nu\in(0,\nu_0)$, $\eps\in(0,\eps_0)$ and 
all initial data complying with
$$
\|\bu^{in}\|_{H^{s+1}}+\|\theta^{in}\|_{H^s+2}\leq \eps \nu^{\frac12},
$$
then the solution to \eqref{eq:omNLintro}-\eqref{eq:thNLintro} satisfies for all $t\geq 0$ the enhanced dissipation estimates
\begin{align}
\|u^x_{\neq}(t)\|_{L^2}+\l t\r\|u^y(t)\|_{L^2}+\l t\r^{-1}\|\omega_{\neq}(t)\|_{L^2}+\|\theta_{\neq}(t)\|_{L^2}&\lesssim \eps\nu^{\frac12}\l t\r^{-\frac12}\e^{-c_0\nu^{\frac13}t}\\
\nu^{-\frac14}\|u^x_0(t)\|_{L^2}+\|\omega_0(t)\|_{L^2}+\nu^{-\frac14}\|\theta_0(t)\|_{L^2}&\lesssim \eps\nu^{\frac14},
\end{align}
where $c_0>0$ is a constant independent of $t$, $\eps$ and $\nu$.
\end{theorem}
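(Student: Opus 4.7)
The strategy is to propagate the linear symmetrization of Section \ref{sec:symm} through the full nonlinear system \eqref{eq:omNLintro}-\eqref{eq:thNLintro}, exploiting the enhanced dissipation of Theorem \ref{thm:enhancelin} to absorb the algebraic derivative losses that in the inviscid setting force the use of Gevrey spaces (cf.\ Theorem \ref{thm:main}). First I would pass to the nonlinear moving frame \eqref{eq:nonlinearchange} built from the zero mode $u^x_0$, decompose every unknown as $f=f_0+f_{\neq}$, and rewrite the nonzero modes in the symmetric variables $Z,Q$ of \eqref{eq:Z1Z2couette}. The analysis then splits into two coupled blocks: an $H^s$-level control of $(Z_{\neq},Q_{\neq})$ equipped with an enhanced dissipation weight $e^{c_0\nu^{1/3}t}$, and a separate parabolic estimate for the shear/density zero modes $(u^x_0,\theta_0)$, which evolve through the $x$-averaged quadratic interactions and enjoy only the basic diffusive decay from $\nu\partial_y^2$.

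Concretely, I would run a bootstrap assuming, for $t\leq T^\star$,
\begin{align}
\|e^{c_0\nu^{1/3}t}Z_{\neq}(t)\|_{H^s}^2+\|e^{c_0\nu^{1/3}t}Q_{\neq}(t)\|_{H^s}^2 &\leq 4\eps^2\nu,\\
\|u^x_0(t)\|_{H^{s+1}}^2+\|\theta_0(t)\|_{H^{s+1}}^2 &\leq 4\eps^2\nu,
\end{align}
together with the intermediate bounds needed for Sobolev embedding. Applying $\partial_t$ to the pointwise functional \eqref{def:pointwise-functional-Couette} weighted by $\jap{k,\eta}^{2s}e^{2c_0\nu^{1/3}t}$, summing in $k$ and integrating in $\eta$, the identity \eqref{eq:energqueq} combined with the coercivity \eqref{eq:coercive-pointwise} reproduces the linear enhanced decay \eqref{eq:energenhanced} and leaves over a viscous dissipation of the form $\nu\|p^{1/2}(Z_{\neq},Q_{\neq})\|_{H^s}^2$; choosing $c_0<\lambda_{\nu,\nu}/24$ absorbs the time exponential exactly as in Proposition \ref{prop:ellisse}, provided $\beta^2>1/4$.

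The nonlinear errors split in the usual paradifferential way into transport and reaction pieces. Transport contributions, where the velocity lies at low frequencies, are controlled by commutator estimates combined with the inviscid-damping bounds $\|u^x_{\neq}\|_{L^\infty}+\jap{t}\|u^y\|_{L^\infty}\lesssim \eps\nu^{1/2}\jap{t}^{-1/2}e^{-c_0\nu^{1/3}t}$ extracted from the bootstrap; after integrating the resulting Gaussian-in-$t^3$ factor they contribute at most $\eps\nu^{1/2}$, compatible with the hypothesis. Reaction contributions, where the velocity lies at high frequencies, are the delicate ones: a factor $p^{-1}\sim\jap{t-\eta/k}^{-2}$ is available from $\Delta_L^{-1}$, but working in Sobolev rather than Gevrey forbids paying a pure $\eta/k^2$ loss as in Section \ref{sec:toy}, so the enhanced dissipation weight must be used instead to confine the critical window $|t-\eta/k|\lesssim \eta/k^2$ to a range where $k^2 t^3\nu\gtrsim 1$, making the product integrable at the cost of the Sobolev budget $s\geq 6$. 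For the zero modes one uses $\partial_t u^x_0=\nu\partial_y^2 u^x_0-\partial_y\langle u^x u^y\rangle_0$ and $\partial_t\theta_0=\nu\partial_y^2\theta_0-\partial_y\langle u^y\theta\rangle_0$: their forcing, being purely $\neq,\neq$, is quadratic in $\eps\nu^{1/2}$ and summable in time once the top-order estimate on $Z_{\neq},Q_{\neq}$ is in hand.

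The main obstacle I anticipate is precisely the reaction nonlinearity on the pre-dissipative time scale $t\lesssim \nu^{-1/3}$: here the shear-buoyancy instability of Theorem \ref{thm:damplin} is still in force, $\|\omega_{\neq}\|_{L^2}$ is allowed to grow like $\jap{t}^{1/2}\eps\nu^{1/2}$, and reaction estimates in $H^s$ pay an additional $\jap{t}^{1/2}$ factor relative to the homogeneous 2D Euler case. Closing the bootstrap demands a quantitative matching in which the smallness $\eps\nu^{1/2}$ defeats the growth accumulated on the interval $[0,\nu^{-1/3}]$ before enhanced dissipation takes over; the exponent $1/2$ in \eqref{eq:stab} is exactly what is needed for this balance. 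Once $T^\star$ is extended to $+\infty$ by standard continuity, reverting to the original variables via $\widehat{\omega_{\neq}}=(p/k^2)^{1/4}Z$ and $\widehat{\theta_{\neq}}=(p/k^2)^{-1/4}(ik\beta)^{-1}Q$ converts the $Z,Q$ estimates into the stated decay rates for $(u^x_{\neq},u^y,\omega_{\neq},\theta_{\neq})$ and, combined with the zero-mode bounds, concludes the proof.
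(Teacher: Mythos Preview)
The paper does not actually prove Theorem~\ref{th:zhao}; it is quoted from \cite{ZZ23} and only accompanied by the two-sentence remark that ``the proof of this theorem requires weighted energy estimates and it crucially relies on the use of symmetric variables (slightly modified for the density).'' So there is no detailed argument in the paper against which to compare your proposal; the most that can be said is that your outline---a bootstrap on a weighted $H^s$ energy built from the symmetric variables $(Z,Q)$, with enhanced dissipation replacing the Gevrey weight $w$ and a separate parabolic estimate for the zero modes---is fully consistent with the paper's brief description and with the heuristic given right after the theorem explaining the $\nu^{1/2}$ threshold as $1/3+1/6$.

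Two minor caveats worth flagging. First, the paper explicitly says the symmetrization is \emph{slightly modified for the density}, so the variables in \cite{ZZ23} are not exactly the $(Z,Q)$ of \eqref{eq:Z1Z2couette}; your proposal uses them unaltered. Second, you invoke the full nonlinear change of coordinates \eqref{eq:nonlinearchange}, but in the viscous Sobolev setting the enhanced dissipation typically makes the linear frame $z=x-yt$ sufficient, with $u^x_0$ treated as a perturbative forcing rather than absorbed into the coordinates; this is the standard route in the threshold literature and is more likely what \cite{ZZ23} does. Neither point is a gap in your strategy, but both are places where the actual proof may differ in its bookkeeping from what you wrote.
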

This result effectively combines the inviscid and viscous dynamics found in the linearized problem, see Theorems \ref{thm:damplin} and \ref{thm:enhancelin}. The proof of this theorem requires weighted energy estimates and it crucially relies on the use of symmetric variables (slightly modified for the density), highlighting again the flexibility of this scheme. 

The use of Sobolev regularity rather than Gevrey is expected from the regularizing properties of the dissipation, which allows to trade regularity for time-decay for instance. 
The exponent $\alpha=1/2$ for the threshold in  Theorem \ref{th:zhao} is likely the minimal value one can hope for. Indeed, the best available result for the 2D homogeneous case is $\alpha=1/3$ (likely optimal) in \cite{MZ22stab}. The $\sqrt{t}$-instability of the linearized problem results in a transient growth of order $\nu^{-\frac16}$. To compensate for this growth, it seems natural to impose an extra $\nu^{\frac16}$ smallness of the initial data, giving us the $1/3+1/6=1/2$ threshold in Theorem \ref{th:zhao}.  Finally, we believe that since the symmetrization scheme captures the linearized dynamics in a quite optimal way, it is useful in obtaining  ``optimal" (in the heuristic sense explained above) estimates for transition threshold, see the 2D MHD case as well \cite{Dolce23}.

\subsection{The non-diffusive case}

As one can infer from the linear theory (\emph{c.f.} Theorem \ref{thm:nondiffmasmoudi}), an analogous result in the viscous, non-diffusive case $\nu>0$, $\kappa=0$, is not really expected. Nonetheless, a form of asymptotic stability still holds when the perturbation is assumed to be in Gevrey-class, as the main result of \cite{masmoudi20bouss} states.

\begin{theorem}[Viscous asymptotic stability, \cite{masmoudi20bouss}]
Let $\nu=1$, $\kappa=0$, $\beta>0$, $s\in(1/3,1]$ and $\lambda_0>0$. There exists $\eps_0\in(0,1)$ such that for all $\eps\in(0,\eps_0)$ and 
all initial data with zero $x$-average complying with
$$
\|\omega^{in}\|_{\mathcal{G}^{\lambda_0,s}}+\|\theta^{in}\|_{\mathcal{G}^{\lambda_0,s}}\leq \eps,
$$
then the solution to \eqref{eq:omNLintro}-\eqref{eq:thNLintro} satisfies asymptotic stability estimate
\begin{align}
\label{eq:uom}
\l t\r\|u^x_{\neq}(t)\|_{L^2}+\l t\r^2\|u^y(t)\|_{L^2}+\|\omega_{\neq}(t)\|_{L^2}&\lesssim \eps \l t\r^{-2},
\end{align}
for all $t\geq 0$.
\end{theorem}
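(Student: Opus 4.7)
The plan is to lift the linear analysis of Section \ref{sec:zerodiff} to the nonlinear setting by combining it with the Gevrey bootstrap machinery outlined in Section \ref{sec:energies}. First I would pass to the nonlinear moving frame \eqref{eq:nonlinearchange} and rewrite the system in the variables $(z,v)$; then on the Fourier side work with the pair $(\Sigma,\hTheta)$, where $\Sigma=-\beta^2 ik\hTheta-p\hOmega$ is the good unknown from \eqref{eq:sigmadef} specialized to $\nu=1$. The linear computation of Section \ref{sec:zerodiff} already shows that $|\Sigma|^2+|\hTheta|^2$ is uniformly bounded in time by its initial value, with an extra dissipation $p|\Sigma|^2$ and an integrable-in-time source $\de_tp\cdot\hOmega$. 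The target decay \eqref{eq:uom} of $\omega_\neq$ would then be recovered \emph{a posteriori} from the pointwise bound $|\hOmega|\lesssim p^{-1}(|\Sigma|+|k\hTheta|)$; the velocity bounds follow from the extra factor of $p^{-1}$ in the Biot--Savart kernel $\nabla_L^\perp\Delta_L^{-1}$ together with $p(t,k,\eta)\gtrsim k^2\langle t\rangle^2$ after extracting Gevrey-small high-frequency tails.

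The nonlinear heart of the argument is the propagation of a Gevrey energy
\begin{align*}
E(t)=\tfrac12\bigl(\|A(t)\Sigma(t)\|_{L^2}^2+\beta^2\|A(t)k\hTheta(t)\|_{L^2}^2\bigr),
\end{align*}
with multiplier $A(t,k,\eta)=\e^{\lambda(t)(|k|+|\eta|)^s}$ and a continuously decreasing radius $\lambda_\infty\leq\lambda(t)\leq\lambda_0$, following the Cauchy--Kovalevskaya scheme of Section \ref{sec:energies}. The bootstrap would close on $E(t)\lesssim\eps^2$ uniformly for $t\geq 0$. Each nonlinear contribution from $\bU_\neq\cdot\nabla\Omega$ and $\bU_\neq\cdot\nabla\Theta$ is split by a paraproduct: the transport pieces are controlled by commutator estimates that pay regularity on the low-frequency velocity, while the reaction pieces are absorbed by the Cauchy--Kovalevskaya damping $-\dot\lambda(t)(|k|+|\eta|)^sE$. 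The zero modes $u_0,\theta_0$ vanish at $t=0$ by hypothesis and are regenerated only quadratically; they can be estimated separately in a Sobolev norm, and crucially the undissipated $\theta_0$ stays of size $\eps^2$, which is enough to keep the nonlinear change of coordinates \eqref{eq:nonlinearchange} well-defined for all times.

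The main obstacle, and the source of the restriction $s>1/3$, is the density equation. In contrast to the viscous--diffusive case of Theorem \ref{th:zhao}, there is no direct dissipation on $\Theta$, and the good unknown $\Sigma$ transfers the viscous regularization to $\hTheta$ only indirectly through a Duhamel-type formula. Running the reaction analysis of Section \ref{sec:toy} on $\bU_\neq\cdot\nabla\Theta$ with this weaker regularization produces a cascade analogous to \eqref{eq:toy1}--\eqref{eq:toy2} whose worst-case growth is now of Gevrey-$3$ type, rather than the Gevrey-$2$ type of the inviscid problem, so that the energy estimate closes precisely for $s>1/3$. Designing the multiplier $A$ (together with an internal weight analogous to $w$ of Section \ref{sec:energies}) so that it absorbs this cascade while retaining favorable frequency-exchange properties under the commutators appearing in the transport pieces is the genuinely technical part of the argument.
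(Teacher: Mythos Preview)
The paper does not give a proof of this theorem; it is quoted from \cite{masmoudi20bouss} and followed only by a short heuristic paragraph. Your outline is broadly compatible with that heuristic in that both rest on the good unknown $\Sigma$ and on the pointwise relation $|\hOmega|\lesssim p^{-1}(|\Sigma|+|k\hTheta|)$. Where you diverge is in the organizing principle and in the explanation of the threshold $s>1/3$.

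The paper's point of view is that once $\Sigma$ is bounded, the vorticity effectively scales like $\de_x\Delta^{-1}\theta$, and substituting this into the Biot--Savart law turns the density equation into a 2D-Euler-type problem with a \emph{modified} velocity $\nabla^\perp\de_x\Delta^{-2}\theta$ in place of $\nabla^\perp\Delta^{-1}\omega$. The extra $\Delta^{-1}$ in this effective transport velocity is what weakens the reaction cascade of Section~\ref{sec:toy} from Gevrey-$2$ to Gevrey-$3$ and produces the threshold $s>1/3$. Your account attributes the threshold instead to an indirect ``Duhamel-type'' transfer of viscous regularization to $\hTheta$; this is not the mechanism. The regularity gain does not come from partial smoothing of $\Theta$ but from the fact that the velocity advecting $\Theta$ is one Laplacian smoother than in the homogeneous Euler problem. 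Reorganizing your bootstrap around the effective density equation (with $\omega$ and its $\langle t\rangle^{-2}$ decay recovered \emph{a posteriori} from $\Sigma$, exactly as you already note) would align your sketch with the paper's heuristic and make the origin of $s>1/3$ transparent.
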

The bounds in \eqref{eq:uom} are in agreement with the linearized case treated in Theorem \ref{thm:nondiffmasmoudi}. We are not including a full statement only to highlight the main differences with Theorems \ref{thm:main} and \ref{th:zhao}. However, a crucial fact here is that the density does not decay but it still is asymptotically stable (as the vorticity in 2D homogeneous Euler). To give a heuristic explanation of why this happens, consider the good unknown $\Sigma$ introduced in Section \ref{sec:zerodiff}. The fact that we can bound it uniformly, suggests that we can effectively replace the vorticity with a quantity scaling like $\de_x\Delta^{-1}\theta$. Hence, the density behaves as if it were satisfying the 2D Euler equation with a modified Biot-Savart law, where the velocity field $\bu=\nabla^{\perp}\Delta^{-1}\omega$ is replaced by $\nabla^\perp \de_x\Delta^{-2} \theta$. We do not delve more deeply into a more detailed explanation of these mechanisms, and we refer to \cite{masmoudi20bouss} for more details.

\section{Conclusions and open problems}

The natural progression from the work presented in \cite{BBCZD21} involves its expansion to encompass the full system of stratified fluids, eliminating the Boussinesq assumption which presumes constant density unless directly contributing to buoyant forces. This extension is of paramount physical relevance, particularly when considering exponential stratification scenarios. At the linearized level, the corresponding analysis was conducted in \cite{BCZD20}, yielding analogous outcomes concerning linear inviscid damping and buoyancy instability. Nonetheless, the nonlinear case proves to be a challenging extension, as density now assumes a pivotal role in its interaction with the momentum equation. These findings can also be situated within the context of the global existence for non-homogeneous Euler equations, where inviscid damping has led to the emergence of examples of global solutions \cites{CWZZ23,Zhao23}.

Theorem \ref{thm:main} describes the dynamics up to the nontrivial time-scale $O(\eps^{-2})$, when the system possibly exits the linearized regime. In particular, thanks to the Gevrey regularity assumptions on the initial data, echoes are under control. It would be interesting to characterize the dynamics after this time and understand whether an asymptotic state near Couette is reached or (secondary) instabilities kick in and drive the system really away from equilibrium. In the latter case, the long-time dynamics may just be too complicated to understand with current methods, but at the moment it is even unclear if the system really leaves the perturbative regime.

\bibliographystyle{siam}
\bibliography{schwartzBiblio}

\end{document}